\newcommand{\norm}[1]{{\left\lvert #1 \right\rvert}}
\newcommand{\abs}[1]{{\lvert {#1} \rvert}}
\newcommand{\sabs}[1]{{\left\lvert {#1} \right\rvert}}
\newcommand{\vnorm}[1]{{\left\lVert #1 \right\rVert}}
\renewcommand{\atop}[2]{\substack{{#1}\\{#2}}}
\newcommand{\ZZ}{\mathbb{Z}}
\newcommand{\NN}{\mathbb{N}}
\newcommand{\RR}{\mathbb{R}}
\newcommand{\PP}{\mathbb{P}}
\newcommand{\TT}{\mathbb{T}}
\newcommand{\calV}{\mathcal{V}}
\newcommand{\calF}{\mathcal{F}}
\newcommand{\calO}{\mathcal{O}}
\newcommand{\pl}[1]{\foreignlanguage{polish}{#1}}
\newtheorem{theorem}{Theorem}
\newtheorem{proposition}{Proposition}[section]
\newtheorem{lemma}{Lemma}
\newcounter{thm}
\newtheorem{main_theorem}[thm]{Theorem}
\newcommand{\ind}[1]{{\mathds{1}_{{#1}}}}
\title[Variational estimates]
{Variational estimates for averages and truncated \\
 singular integrals along the prime numbers}
\subjclass[2010]{Primary: 42A45, 42A16\\ Secondary: 37A45, 37A05}
\author{Mariusz Mirek}
\address{Mariusz Mirek \\
	Universit\"{a}t Bonn \\
	Mathematical Institute\\
	Endenicher Allee 60\\
	D--53115 Bonn \\
	Germany \&
	\pl{Instytut Matematyczny\\
	Uniwersytet Wroc{\lll}awski\\
	Pl. Grun\-waldzki 2/4\\
	50-384 Wroc{\lll}aw}\\
	Poland}
 \email{mirek@math.uni-bonn.de}
\author{Bartosz Trojan}
\address{
	Bartosz Trojan\\
	\pl{Instytut Matematyczny\\
	Uniwersytet Wroc{\lll}awski\\
	Pl. Grun\-waldzki 2/4\\
	50-384 Wroc{\lll}aw}\\
	Poland}
\email{trojan@math.uni.wroc.pl}
\author{Pavel Zorin-Kranich}
\address{Pavel Zorin-Kranich \\
	Universit\"{a}t Bonn \\
	Mathematical Institute\\
	Endenicher Allee 60\\
	D--53115 Bonn \\
	Germany}
\email{pzorin@math.uni-bonn.de}
\thanks{
Mariusz Mirek and Bartosz Trojan were partially supported by NCN grant DEC--2012/05/D/ST1/00053.\\
Pavel Zorin-Kranich was partially supported by the ISF grant 1409/11.}
\begin{document}
\selectlanguage{english}

\begin{abstract}
  We prove, in a unified way, $r$-variational estimates, $r>2$, on
  $\ell^{s}(\ZZ)$ spaces, $s \in (1, \infty)$, for averages and truncated
  singular integrals along the set of prime numbers. Moreover,
  we obtain an improved growth rate of the bounds as $r\to 2$.
\end{abstract}
\maketitle

\section{Introduction}
Let $(X, \mathcal{B}, \mu)$ be a $\sigma$-finite measure space endowed
with an invertible measure preserving transformation $T$. In
this article we obtain, for every $s \in (1, \infty)$ and $f\in L^s(X,
\mu)$, variational bounds for the ergodic averages along the set of
prime numbers $\PP$
\begin{equation}
  \label{eq:a1}
  \mathcal A_N f(x) = \frac{1}{N} \sum_{p\in\PP_N}  f\big(T^px\big)\log p,
\end{equation}
and the truncated  Hilbert transforms
\begin{equation}
  \label{eq:s1}
  \mathcal H_Nf(x) = \sum_{p\in \pm\PP_N}f\big(T^px\big)\frac{\log|p|}{p}
\end{equation}
where $\PP_N=\PP\cap[1, N]$.

The study of pointwise convergence for averaging operators with arithmetic features was initiated
by Bourgain in \cite{bou1, bou2} and \cite{bou}, where pointwise convergence along polynomials was
proven. In \cite{bou-p,bou} Bourgain (see also Wierdl \cite{wrl}) proved that for every
$s \in (1, \infty]$ there is a constant $C_s>0$ such that
\begin{align}
	\label{eq:1}
	\big\|\sup_{N\in\NN}|\mathcal{A}_Nf|\big\|_{L^s}\le C_s\|f\|_{L^s}.
\end{align}
Recently, in \cite{mt2} it has been shown that for every $s \in (1, \infty)$ there is a constant
$C_s>0$ such that
\begin{align}
	\label{equ:2}
	\big\|\sup_{N\in\NN}|\mathcal{H}_Nf|\big\|_{L^s}\le C_s\|f\|_{L^s}.
\end{align}
These maximal inequalities combined with some oscillation estimates were used to prove the
pointwise convergence of $\mathcal A_Nf$ and $\mathcal H_Nf$ for any $f\in L^s(X, \mu)$, see 
\cite{bou} and \cite{mt2} respectively.

The purpose of this paper is to strengthen the inequalities \eqref{eq:1} and \eqref{equ:2}
and provide strong $r$-variational estimates for the sequences $\big(\mathcal A_Nf: N\in\NN\big)$
and $\big(\mathcal H_Nf:N\in\NN\big)$. Let us recall, that for a sequence $\big(a_n : n \in A\big)$ with
$A \subseteq \ZZ$ and $r \geq 1$, the $r$-variation seminorm $V_r$ is defined by
\[
V_r\big(a_n: n \in A\big)
=\sup_{\atop{n_0<\ldots <n_J}{n_j\in A}}
\Big(\sum_{j=0}^J|a_{n_{j+1}}-a_{n_j}|^r\Big)^{1/r}.
\] 

The Calder\'{o}n transference principle allows us to reduce the matters and work on $\ZZ$
rather than on an abstract measure space $X$. Therefore the set of integers $\ZZ$ with
the counting measure and the bilateral shift operator will be our model dynamical system.
In this context, the operators \eqref{eq:a1} and \eqref{eq:s1} can be treated as convolution
operators with the kernels
\begin{equation}
	\label{eq:3}
	A_N(x) = \frac{1}{N} \sum_{p\in\PP_N}\delta_p(x)\log p,
\end{equation}
and 
\begin{equation}
	\label{eq:4}
	H_N(x) = \sum_{p\in \pm\PP_N}\delta_p(x) \frac{\log \abs{p}}{p},
\end{equation}
respectively, where $\delta_n$ stands for Dirac's delta at $n\in\ZZ$. The main result of this
paper is the following.
\begin{main_theorem}
	\label{thm:var'}
	Let $T_N$ be a convolution operator given either by \eqref{eq:3} or by \eqref{eq:4}. Then for 
	$s \in (1, \infty)$ and $r>2$ there is a constant $C_s>0$ such that
	\begin{align*}
		\big\|V_r\big(T_Nf: N\in\NN\big)\big\|_{\ell^s}
		\le
		C_s\frac{r}{r-2}\|f\|_{\ell^s},
	\end{align*}
	 for every $f\in\ell^s(\ZZ)$.
\end{main_theorem}

The advantage of studying $r$-variational seminorm is twofold. On the one hand, for each $r\geq 1$,
$r$-variations control supremum norm. More precisely, for any sequence of functions
$\big(a_n(x): n\in \NN\big)$ and any $n_0\in \NN$ we have the pointwise estimate
\[
\sup_{n\in \NN}|a_n(x)|\le |a_{n_0}(x)| + V_r\big(a_n(x): n\in \NN\big).
\]
On the other hand, $r$-variation seminorm is an invaluable tool in problems concerning pointwise 
convergence. Namely, if $ V_r\big(a_n(x): n\in \NN\big)<\infty$ then the sequence 
$\big(a_n(x): n\in \NN\big)$ converges. The second property is especially important since we obtain
a quantitative form of almost everywhere convergence of $\big(a_n(x): n\in \NN\big)$. Moreover, one
does not need to find a dense class of functions for which the pointwise convergence holds which
sometimes might be a difficult problem.

Variational estimates were the subject of many papers, see \cite{jkrw, jsw, k, zk} and the
references therein. Let us notice that Theorem~\ref{thm:var'} for the truncated singular integral
immediately implies that the singular integral along the primes
\[
\mathcal Tf(x)=\sum_{p\in\pm \PP} f(x-p) \frac{\log \abs{p}}{p}
\]
is bounded on $\ell^s(\ZZ)$ for any $s \in (1, \infty)$. This can be considered, to some extent, as an
extension of a result of Ionescu and Wainger \cite{iw} to the set of prime numbers. A multidimensional
version of Bourgain's averaging operator along polynomial mappings \cite{mt3} and truncated
version of Ionescu and Wainger singular integral \cite{iw} were considered by the first two authors
with E.~M. Stein and analogous results to Theorem \ref{thm:var'} have been obtained.
This will be the subject of a forthcoming paper.

In the last section we obtain an unweighted version  of Theorem \ref{thm:var'} for averaging
operators. Namely, let
\[
	A_N'(x)=\frac{1}{|\PP_N|}\sum_{p\in\PP_N}\delta_p(x).
\]
Then as a consequence of Theorem  \ref{thm:var'} we obtain the following.
\begin{main_theorem}
	\label{thm:var''}
	For $s \in (1, \infty)$ and $r>2$ there is a constant $C_s>0$ such that
	\[
		\big\|V_r\big(A_N' * f: N\in\NN\big)\big\|_{\ell^s}
		\le C_s\frac{r}{r-2}\|f\|_{\ell^s}
	\]
	for every $f\in\ell^s(\ZZ)$.
\end{main_theorem}
Theorem~\ref{thm:var'} extends our earlier results on the Hilbert transform \cite{mt2} and
the ergodic averages \cite{zk} and simultaneously unifies and simplifies their proofs. Its proof
proceeds in several steps. In Section \ref{sec:2} we collect some $\ell^s(\ZZ)$ estimates for
$r$-variations of some general multipliers. The general philosophy lying behind these proofs is
the transference principle which allows us to compare $\ell^s(\ZZ)$ estimates with
\textit{a priori} $L^s(\RR)$ estimates.  In Section \ref{sec:3} we use the circle method
to construct approximating multipliers which will be used to build up
$\ell^2(\ZZ)$ theory. The results in Section \ref{sec:3} are formulated in an
abstract form and are applicable to both kinds of kernels we are interested in. In Section
\ref{sec:4} we provide a systematic proof of Theorem \ref{thm:var'} based on the separate analysis
of short and long variations. The short variations are covered by the ideas from \cite{zk}.
In order to bound the long variations we use the results from Section \ref{sec:2} and 
Section \ref{sec:3}. The approach we exploit here strongly uses some specific features of the prime
numbers and this is how we preserve the dependence of the form ${r}/{(r-2)}$
on the variational exponent $r$ obtained for the corresponding continuous operators in \cite{jsw},
whereas the methods in \cite{zk} lose an additional factor of ${r}/(r-2)$.
In the last section, using Theorem \ref{thm:var'} and some transference principle for 
$r$-variations we change the weights in the averages and prove Theorem \ref{thm:var''}.

\subsection{Notation}
Throughout the paper, unless otherwise stated, $C > 0$ stands for a large positive constant
whose value may vary from occurrence to occurrence. We will say that $A\lesssim B$ ($A\gtrsim B$)
if there exists an absolute constant $C>0$ such that $A\le CB$ ($A\ge CB$). If $A\lesssim B$ and 
$A \gtrsim B$ hold simultaneously then we will shortly write that $A\simeq B$. To indicate that
the the constant depends on some $\delta > 0$ we will write $A\lesssim_{\delta} B$ 
($A\gtrsim_{\delta} B$). Let $\NN_0=\NN\cup\{0\}$. 

\subsection*{Acknowledgments}
The authors thank the Hausdorff Research Institute for Mathematics for
hospitality during the Trimester Program ``Harmonic Analysis and
Partial Differential Equations''. Especially, we are greatly indebted to Christoph Thiele for his warm 
hospitality.

The authors wish to thank Jim Wright for helpful discussions on the subject of the paper.

\section{Variational bounds}
\label{sec:2}

Here we provide some general variational estimates which allow us to
prove our main result. The proofs will be based on the Fourier
transform methods. However, we start by recalling some basic facts
from number theory. A general reference is \cite{nat}. We treat $[0, 1]$ as the circle group $\TT$, identifying $0$ and $1$.
 
For a given $q \in \NN$ let $A_q$ to be the set of all $a \in \ZZ \cap [1, q]$ such that
$(a, q) = 1$. Let $\varphi$ be Euler's totient function, which is the counting function of $A_q$.  It is well known
that for every $\epsilon > 0$ there is a constant $C_{\epsilon} > 0$ such that
\begin{equation}
	\label{eq:5}
	\varphi(q) \geq C_{\epsilon} q^{1-\epsilon}.
\end{equation}
Another arithmetic function we will need is the divisor function $d(q)$ of $q\in\NN$. We also know
that for every $\epsilon > 0$ there is a constant $C_{\epsilon} > 0$ such that
\begin{equation}
	\label{eq:6}
	d(q) \leq C_{\epsilon} q^{\epsilon}.
\end{equation}
By $\mu$ we denote M\"obius function, i.e. $\mu(1)=1$,   $\mu(q)=(-1)^n$ if $q$ is a product of $n$ distinct
prime numbers, and $\mu(q)=0$ otherwise. From the Ramanujan's identity we obtain
\[
\mu(q) = \sum_{r \in A_q}  e^{2 \pi i r a/ q} \ \ \mbox{if \  $(a, q)=1$}.
\]

Let $\mathcal{F}$ denote the Fourier transform on $\RR$ defined for any $f \in L^1(\RR)$ by
\[
\mathcal{F} f(\xi) = \int_\RR f(x) e^{2\pi i \xi x} {\rm d}x.
\]
If $f \in \ell^1(\ZZ)$ we set
\[
\hat{f}(\xi) = \sum_{n \in \ZZ} f(n) e^{2\pi i \xi n}.
\]
We fix $\eta: \RR \rightarrow \RR$ a smooth function such that $0 \leq \eta(x) \leq 1$ and
\[
\eta(x) = \begin{cases}
	1 & \text{for } \abs{x} \leq 1/4,\\
	0 & \text{for } \abs{x} \geq 1/2.
\end{cases}
\]
We may assume that $\eta$ is a convolution of two smooth functions with compact supports contained
in $[-1/2, 1/2]$. We fix $D > 32$ and set
\[
\eta_t(\xi)=\eta\big(2 \pi \cdot D^{t+2}\xi\big).
\]
%Let $\phi$ be a non-negative smooth function with the support contained inside $[-1/16, 1/16]$ and
%having integral $1$. Let $\psi$ be another non-negative smooth function such that
%\[
%\psi(x) =
%\begin{cases}
%	1 & \text{for } \abs{x} \leq 5/16,\\
%	0 & \text{for } \abs{x} \geq86/16
%\end{cases}
%\]
%Then $\eta = \phi * \psi$ has desired properties.
Let us recall the following elementary lemma.
\begin{lemma}{\cite[Lemma 5]{mt3}}
	\label{lem:1}
	For each $t \in \NN$ and $u\in\RR$
	\begin{equation}
		\label{eq:130}
		\bigg\lVert
		\int_\TT
		e^{-2\pi i \xi j} \eta_t(\xi) {\rm d}\xi
		\bigg\rVert_{\ell^1(j)} \leq 1,
	\end{equation}
	\begin{equation}
		\label{eq:140}
		\bigg\lVert
		\int_\TT
		e^{-2\pi i \xi j}
		\big(1-e^{2\pi i \xi u}\big)\eta_t(\xi) {\rm d}\xi
		\bigg\rVert_{\ell^1(j)} \leq |u| D^{-t-2}.
	\end{equation}
\end{lemma}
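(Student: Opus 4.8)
The plan is to exploit the hypothesis that $\eta=\phi*\psi$ factors as a convolution of two smooth functions supported in $[-1/2,1/2]$. Put $M:=2\pi D^{t+2}$; since $D>32$ and $t\ge 1$ we have $M\ge 2\pi D^{3}>2\pi\cdot 32^{3}$, so $\eta_t(\xi)=\eta(M\xi)$, together with the dilates $\phi_t(\xi)=\phi(M\xi)$ and $\psi_t(\xi)=\psi(M\xi)$, are all supported in a tiny neighbourhood of the origin contained in $(-1/2,1/2)$. Hence every integral $\int_\TT$ below equals the corresponding integral $\int_\RR$, and after a change of variables $\eta_t=M\,(\phi_t*\psi_t)$ on $\RR$. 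Because the inverse Fourier transform $\mathcal{F}^{-1}$ on $\RR$ turns convolution into a pointwise product, the Fourier coefficients factor:
\[
\int_\TT e^{-2\pi i\xi w}\eta_t(\xi)\,{\rm d}\xi
=\int_\RR e^{-2\pi i\xi w}\eta_t(\xi)\,{\rm d}\xi
=M\,\mathcal{F}^{-1}\phi_t(w)\,\mathcal{F}^{-1}\psi_t(w),\qquad w\in\RR.
\]

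For \eqref{eq:130} I would set $w=j$, apply the Cauchy--Schwarz inequality in $j\in\ZZ$, and then invoke the sampling identity $\sum_{j\in\ZZ}|H(j)|^{2}=\|H\|_{L^{2}(\RR)}^{2}$, valid whenever $\mathcal{F}H$ is supported in $(-1/2,1/2)$ (Parseval's identity on $\TT$, or Poisson summation applied to $|H|^{2}$). Since $\mathcal{F}(\mathcal{F}^{-1}\phi_t)=\phi_t$ and $\mathcal{F}(\mathcal{F}^{-1}\psi_t)=\psi_t$ are supported in $(-1/2,1/2)$, combining this with Plancherel and the scaling $\|\phi_t\|_{L^{2}(\RR)}^{2}=M^{-1}\|\phi\|_{L^{2}(\RR)}^{2}$ yields
\[
\Big\|\int_\TT e^{-2\pi i\xi j}\eta_t(\xi)\,{\rm d}\xi\Big\|_{\ell^{1}(j)}
\le M\,\|\mathcal{F}^{-1}\phi_t\|_{\ell^{2}(\ZZ)}\|\mathcal{F}^{-1}\psi_t\|_{\ell^{2}(\ZZ)}
=M\,\|\phi_t\|_{L^{2}(\RR)}\|\psi_t\|_{L^{2}(\RR)}
=\|\phi\|_{L^{2}(\RR)}\|\psi\|_{L^{2}(\RR)}.
\]
Choosing the factorisation $\eta=\phi*\psi$ so that $\|\phi\|_{L^{2}(\RR)}\|\psi\|_{L^{2}(\RR)}\le1$ — which is exactly what the freedom in the construction of $\eta$ buys us, for instance $\eta=\phi*\phi$ with $\phi$ an even bump normalised by $\|\phi\|_{L^{2}(\RR)}=1$ — then gives \eqref{eq:130}. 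In that symmetric case one may bypass Cauchy--Schwarz altogether: $\mathcal{F}^{-1}\eta_t\ge0$, so the left-hand side is simply $\sum_{j}\int_\TT e^{-2\pi i\xi j}\eta_t(\xi)\,{\rm d}\xi=\eta_t(0)=\eta(0)=1$ by Fourier inversion on $\TT$.

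For \eqref{eq:140} the integrand equals $M\big(\mathcal{F}^{-1}\phi_t(j)\mathcal{F}^{-1}\psi_t(j)-\mathcal{F}^{-1}\phi_t(j-u)\mathcal{F}^{-1}\psi_t(j-u)\big)$, and I would expand the difference of products via $F(a)G(a)-F(b)G(b)=F(a)\big(G(a)-G(b)\big)+\big(F(a)-F(b)\big)G(b)$ and estimate each of the two resulting pieces by Cauchy--Schwarz in $j$ as above, using that translates of band-limited functions remain band-limited. Each piece carries a factor of the shape
\[
\Big(\sum_{j\in\ZZ}\big|\mathcal{F}^{-1}\psi_t(j)-\mathcal{F}^{-1}\psi_t(j-u)\big|^{2}\Big)^{1/2}
=\big\|\mathcal{F}^{-1}\psi_t-\mathcal{F}^{-1}\psi_t(\cdot-u)\big\|_{L^{2}(\RR)}
=\Big(\int_\RR\big|1-e^{2\pi i\xi u}\big|^{2}|\psi_t(\xi)|^{2}\,{\rm d}\xi\Big)^{1/2}
\]
by Plancherel; since $\psi_t$ is supported where $|\xi|\le (2M)^{-1}$, the elementary inequality $|1-e^{2\pi i\xi u}|\le2\pi|u||\xi|$ bounds this by $\le\pi|u|M^{-1}\|\psi_t\|_{L^{2}(\RR)}$, which is the source of the gain $|u|M^{-1}\simeq|u|D^{-t-2}$. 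Collecting the two pieces and again using the normalisation of the decomposition produces the bound $|u|D^{-t-2}$.

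The only genuinely delicate point is the bookkeeping of the absolute constant: everything reduces to selecting the factorisation $\eta=\phi*\psi$ so that the $L^{2}(\RR)$ norms that appear multiply to at most $1$, which is the content packaged into the sentence preceding the lemma. The remaining ingredients — the reduction from $\TT$ to $\RR$, Plancherel, the sampling identity, and $|1-e^{i\theta}|\le|\theta|$ — are entirely routine.
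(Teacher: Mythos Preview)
The paper does not prove this lemma; it is quoted without proof from \cite[Lemma~5]{mt3}, so there is nothing in the paper to compare your argument against.

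Your argument is correct and is essentially the intended one. The sentence preceding the lemma is there precisely to license the factorisation $\eta=\phi*\phi$ with $\phi$ real, even, smooth and supported in $[-1/2,1/2]$; the condition $\eta(0)=1$ then forces $\|\phi\|_{L^2(\RR)}=1$, so the normalisation you need is automatic rather than an extra choice. For \eqref{eq:130} your ``bypass'' is the cleanest route: $\mathcal{F}^{-1}\eta_t(j)=M\big(\mathcal{F}^{-1}\phi_t(j)\big)^2\ge 0$, hence the $\ell^1$-norm equals $\sum_{j\in\ZZ}\mathcal{F}^{-1}\eta_t(j)=\sum_{k\in\ZZ}\eta_t(k)=\eta_t(0)=1$ by Poisson summation, only $k=0$ surviving since $\operatorname{supp}\eta_t\subset(-1/2,1/2)$. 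For \eqref{eq:140} your Cauchy--Schwarz--sampling scheme works and the constants land on the nose: each of the two pieces is bounded by $M\cdot M^{-1/2}\|\phi\|_{L^2}\cdot \pi|u|M^{-1}\cdot M^{-1/2}\|\phi\|_{L^2}=\pi|u|M^{-1}$, and $2\pi M^{-1}=D^{-t-2}$.
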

For $r \geq 1$ and a sequence $\big(a_n : n \in A\big)$ with $A \subseteq \NN$, we define $r$-variation
seminorm by
\[
	V_r\big(a_n : n \in A\big) = 
	\sup_{\atop{n_0 < \cdots < n_J}{n_j \in A}}
	\Big(\sum_{j = 1}^J \abs{a_{n_j} - a_{n_{j-1}}}^r \Big)^{1/r}.
\]
Then the $r$-variation norm is given by
\[
\calV_r\big(a_n: n \in A\big)=\sup_{n \in A} \abs{a_n}+V_r\big(a_n: n\in A\big).
\]

\begin{proposition}
	\label{prop:3}
	Suppose $\big(\Phi_N: N\in\NN\big)$ is a sequence of functions on $\RR$ such that for some 
	$s \in [1, \infty)$ and $r\ge 1$ there is $B>0$ so that for any $f\in L^s(\RR) \cap L^2(\RR)$
	\begin{align}
		\label{eq:101}
		\big\|
		\mathcal V_r\big(\mathcal F^{-1}\big(\Phi_N\mathcal F f\big): N\in \NN\big)\big\|_{L^s}
		\le B\|f\|_{L^s}.
	\end{align}
	Then there is a constant $C > 0$ such that for any $t \in \NN_0$,
	$q \in [2^t, 2^{t+1})$, $m \in \{1,\ldots,q\}$ and any $f \in \ell^1(\ZZ)$
	we have
	\begin{equation}
		\label{eq:102}
		\big\|
		\mathcal V_r\big(\mathcal F^{-1}\big(\Phi_N \eta_t \hat{f}\big)(q x + m): N \in \NN\big)
		\big\|_{\ell^s(x)}
		\le C B
		\big\|
		\mathcal F^{-1}\big(\eta_t \hat{f} \big)(q x + m)
		\big\|_{\ell^s(x)}.
	\end{equation}
\end{proposition}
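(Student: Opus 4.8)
The plan is to transfer the continuous estimate \eqref{eq:101} to the arithmetic setting by exploiting the fact that the multiplier $\Phi_N\eta_t$ is supported in a tiny neighbourhood of the origin (of radius $\lesssim D^{-t-2}$), which is much smaller than the scale $1/q \simeq 2^{-t}$ on which the dilated lattice $q\ZZ + m$ resolves frequencies. Concretely, write $g = \mathcal F^{-1}(\eta_t\hat f)$ so that the right-hand side of \eqref{eq:102} is $\|g(qx+m)\|_{\ell^s(x)}$; the goal is to dominate the $\calV_r$-norm of the sampled sequence $\big(\mathcal F^{-1}(\Phi_N\hat g)(qx+m)\big)_N$ by $\|g(qx+m)\|_{\ell^s(x)}$. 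The first step is a \emph{sampling principle}: since $\hat g = \eta_t\hat f$ lives in the band $\abs{\xi}\le D^{-t-2}/(2\pi)$, one may build a function $G$ on $\RR$ whose restriction to $\ZZ$ equals $g$ and such that $\calV_r$ of the continuous multiplier family applied to $G$, sampled back to $\ZZ$, controls $\calV_r$ of the discrete family applied to $g$ — this is the standard Magyar--Stein--Wainger transference of multipliers supported near $0$, here made quantitative with the variational norm in place of the maximal function. Because the statement is only about a single scale $t$ (the cutoff $\eta_t$ is fixed), no summation over $t$ is needed and the losses are absolute.

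The second step handles the arithmetic progression $qx+m$. Decompose $g = \sum_{n\in\ZZ} g(n)\delta_n$ according to residues mod $q$; only the residue class $n\equiv m \pmod q$ contributes to $g(qx+m)$, but $\Phi_N\eta_t$ smears mass across nearby integers. The key observation is Lemma~\ref{lem:1}: the kernel $k_t(j) = \int_\TT e^{-2\pi i\xi j}\eta_t(\xi)\,{\rm d}\xi$ has $\ell^1(j)$-norm at most $1$, and more importantly $\abs{j}\le D^{-t-2}$ in the effective support sense, so convolution by $\Phi_N\eta_t$ essentially does not move mass between different cosets of $q\ZZ$ once we pass to the continuous model. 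One then writes $\mathcal F^{-1}(\Phi_N\eta_t\hat f)(qx+m)$ as a continuous convolution (against the inverse Fourier transform of $\Phi_N$ times a smooth bump) evaluated on the rescaled lattice, reducing \eqref{eq:102} to \eqref{eq:101} applied to the rescaled function. Using \eqref{eq:130} to bound the passage from $\hat f$ to the localized $\eta_t\hat f$, and then the change of variables $x\mapsto qx+m$ which is an isometry (up to the factor $q$) between $\ell^s(\ZZ)$ and $\ell^s$ of the progression, one recovers the right-hand side of \eqref{eq:102} with an absolute constant times $B$.

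\textbf{The main obstacle} I anticipate is making the sampling step rigorous while preserving the exact $\calV_r$-seminorm structure: one must check that the continuous-to-discrete comparison does not interchange the order of the sup over increasing sequences $n_0<\dots<n_J$ with the sampling operation in a lossy way, and that the full norm $\calV_r = \sup_N\abs{\cdot} + V_r$ (not merely $V_r$) transfers cleanly, including the supremum term. The clean way around this is to treat the vector-valued multiplier $\big(\Phi_N\big)_{N\in\NN}$ as a single Banach-space-valued ($\calV_r$-valued) Fourier multiplier and invoke the sampling/transference principle for Banach-valued multipliers supported near the origin; then \eqref{eq:101} is exactly the hypothesis that this vector-valued multiplier is bounded $L^s(\RR)\to L^s(\RR;\calV_r)$, and the conclusion \eqref{eq:102} is its discretized, rescaled counterpart. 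A secondary technical point is that the statement allows $f\in\ell^1(\ZZ)$ only (to ensure $\hat f$ and all manipulations make sense pointwise), so one should first prove \eqref{eq:102} for nice $f$ and note both sides are finite; no density argument is needed since the inequality is stated for $\ell^1$ inputs. With these pieces in place the proof is a bookkeeping exercise combining Lemma~\ref{lem:1} with the standard transference of origin-supported multipliers.
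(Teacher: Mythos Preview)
Your overall transference philosophy is sound, but there is a concrete slip that would derail the arithmetic-progression step. You write that the kernel $k_t(j)=\int_\TT e^{-2\pi i\xi j}\eta_t(\xi)\,{\rm d}\xi$ has ``$|j|\le D^{-t-2}$ in the effective support sense'' and hence ``convolution by $\Phi_N\eta_t$ essentially does not move mass between different cosets of $q\ZZ$''. This is backwards: it is $\eta_t$ that is supported in $|\xi|\lesssim D^{-t-2}$, so $\mathcal{F}^{-1}\eta_t$ is spread over the \emph{large} physical scale $D^{t+2}$, and the kernel of $\Phi_N\eta_t$ even more so (for the averages it lives at scale $N$). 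Convolution by $\Phi_N\eta_t$ therefore mixes cosets of $q\ZZ$ massively; there is no physical locality to exploit, and the ``rescaling $x\mapsto qx+m$ reduces to \eqref{eq:101}'' idea does not go through directly because the hypothesis concerns $\Phi_N$, not $\Phi_N(\cdot/q)$.

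What the paper actually uses is the dual statement: because the frequency support of $\eta_t$ sits in a ball of radius $\lesssim D^{-t-2}$, a \emph{translation} in physical space by an amount at most $q\le 2^{t+1}$ barely changes anything --- this is precisely \eqref{eq:140}, with the quantitative gain $q\,D^{-t-1}\le q^{-1}$ since $q^2\le D^{t+1}$. The paper first proves the full-lattice inequality
\[
\big\|\mathcal{V}_r\big(\mathcal{F}^{-1}(\Phi_N\eta_t\hat f):N\in\NN\big)\big\|_{\ell^s}\le CB\,\big\|\mathcal{F}^{-1}(\eta_t\hat f)\big\|_{\ell^s}
\]
by a hands-on sampling argument (write $\eta_t=\eta_{t-1}\eta_t$, majorize the integer-point value by a convolution with $|\mathcal{F}^{-1}\eta_{t-1}|$, apply \eqref{eq:101}, and return from $L^s$ to $\ell^s$ via \eqref{eq:140}). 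It \emph{then} compares the residue-class norms $J_m=\|\mathcal{V}_r(\cdots)(qx+m)\|_{\ell^s(x)}$ with one another by inserting a shift of size $|m-m'|\le q$ and invoking \eqref{eq:140} again, obtaining $J_m\le J_{m'}+q^{-1}CB\|\mathcal{F}^{-1}(\eta_t\hat f)\|_{\ell^s}$. Averaging over $m'$ gives $J_m\lesssim B\,q^{-1/s}\|\mathcal{F}^{-1}(\eta_t\hat f)\|_{\ell^s}$, which matches the right-hand side of \eqref{eq:102} by a separate sampling equivalence for $\eta_t\hat f$. Your Banach-valued Magyar--Stein--Wainger framing can be made to cover the full-lattice step, but the passage to the subprogression $q\ZZ+m$ needs exactly this translation-stability comparison between residue classes, not a locality claim about the kernel.
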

\begin{proof}
	First, we show that for some $C > 0$ we have
	\begin{equation}
		\label{eq:31}
		\big\|
		\mathcal V_r\big(\mathcal F^{-1}\big(\Phi_N \eta_t \hat{f}\big): N \in \NN\big)
		\big\|_{\ell^s}
		\le C B
		\big\|
		\mathcal F^{-1}\big(\eta_t \hat{f} \big)
		\big\|_{\ell^s},
	\end{equation}
	where $C$ is independent of $D$. Since $\eta_t=\eta_{t-1}\eta_t$, by H\"older's inequality we have
	\begin{multline*}
		\mathcal V_r\big(\mathcal{F}^{-1} \big(\Phi_N \eta_t \hat{f}  \big)(n): N\in\NN\big) ^s
		\leq
		\bigg(\int_{\RR}
		\mathcal V_r\big(
		\mathcal{F}^{-1} \big(\Phi_N \eta_t \hat{f}  \big)(x): N\in\NN\big)
		\big\lvert\mathcal{F}^{-1}\eta_{t-1}(n - x)\big\rvert {\rm d}x\bigg)^s\\
		\leq
		\vnorm{\mathcal{F}^{-1} \eta_{t-1}}_{L^{1}}^{s-1}
		\int_{\RR} \mathcal V_r\big(
		\mathcal{F}^{-1} \big(\Phi_N \eta_t \hat{f}  \big)(x): N\in\NN\big)^s
		\big\lvert \mathcal{F}^{-1} \eta_{t-1}(n - x) \big\rvert {\rm d}x.
	\end{multline*}	
	Notice that $\vnorm{\mathcal{F}^{-1} \eta_{t-1}}_{L^{1}}\lesssim 1$ and
	\[
	\sum_{n \in \ZZ} \big\lvert \mathcal{F}^{-1}\eta_{t-1} (n-x) \big\rvert
	\lesssim D^{-t-1}\sum_{n \in \ZZ} \frac{1}{1 + (D^{-{t-1}}(n - x))^2}
	\lesssim D^{-{t-1}}(1+D^{t+1}) \lesssim 1
	\]
	with the implied constants independent of $D$. Hence, we obtain 
	\begin{equation*}
		\big\lVert
		\mathcal V_r\big(
		\mathcal{F}^{-1} \big(\Phi_N \eta_t \hat{f}  \big): N\in\NN\big)
		\big\rVert_{\ell^s}
		\lesssim 
		\big\lVert
		\mathcal V_r\big(
		\mathcal{F}^{-1} \big(\Phi_N \eta_t \hat{f}  \big): N\in\NN\big)
		\big\rVert_{L^s}\lesssim B\big\|\mathcal{F}^{-1} \big(\eta_t\hat{f}\big)\big\|_{L^s},
	\end{equation*}
	where the last inequality is a consequence of \eqref{eq:101}. The proof of \eqref{eq:31}
	will be completed if we show that
	\[
	\big\| \mathcal{F}^{-1} \big(\eta_t\hat{f}\big) 	\big\|_{L^s}
	\lesssim \big\|\mathcal{F}^{-1} \big(\eta_t\hat{f}\big)\big\|_{\ell^s}.
	\]
	For this purpose we use \eqref{eq:140} from Lemma \ref{lem:1}. Indeed,
	\begin{multline*}
		\big\|\mathcal{F}^{-1} \big(\eta_t \hat{f}\big)\big\|_{L^s}^s
		=
		\sum_{j\in\ZZ}\int_{0}^{1} \big|\mathcal{F}^{-1} \big(\eta_t\hat{f}\big)(x+j)\big|^s 
		{\rm d}x\\
		\leq
		2^{s-1}\big\|\mathcal{F}^{-1} \big(\eta_t\hat{f}\big)\big\|_{\ell^s}^s
		+ 2^{s-1}\sum_{j\in\ZZ}\int_{0}^{1} \big|\mathcal{F}^{-1}
		\big(\eta_t\hat{f}\big)(x+j)-\mathcal{F}^{-1} \big(\eta_t\hat{f}\big)(j)\big|^s {\rm d} x\\
		=
		2^{s-1}\big\|\mathcal{F}^{-1} \big(\eta_t\hat{f}\big)\big\|_{\ell^s}^s
		+2^{s-1}\int_{0}^{1} \bigg\|\int_{-1/2}^{1/2} e^{-2\pi i \xi j}
		\big(1-e^{-2\pi i \xi x}\big)\eta_t(\xi)\hat{f}(\xi){\rm d}\xi \bigg\|_{\ell^s(j)}^s
		{\rm d}x\\
		\leq
		2^{s-1}\big\|\mathcal{F}^{-1} \big(\eta_t\hat{f}\big)\big\|_{\ell^s}^s
		+2^{s-1}\int_{0}^{1} \bigg\|\int_{-1/2}^{1/2} e^{-2\pi i \xi j}
		\big(1-e^{-2\pi i \xi x}\big)\eta_{t-1}(\xi) {\rm d} \xi\bigg\|_{\ell^1(j)}^s
		\big\|\mathcal{F}^{-1} \big(\eta_t\hat{f}\big)\big\|_{\ell^s}^s {\rm d}x\\
		\lesssim
		\big\|\mathcal{F}^{-1} \big(\eta_t\hat{f}\big)\big\|_{\ell^s}^s.
	\end{multline*}
	Next, using \eqref{eq:31} we prove \eqref{eq:102}. For each $m \in \{1, \ldots, q\}$ we define
	\[
	J_m = \big\|\mathcal V_r\big(\mathcal F^{-1}\big(\Phi_N
        \eta_t \hat{f}\big)(q x + m): N \in \NN\big)
        \big\|_{\ell^s(x)}.
	\]
	Then, by \eqref{eq:31} we obtain
	\[
	I^s =
	\sum_{m=1}^q J_m^s =
	\big\|\mathcal V_r\big(\mathcal F^{-1}\big(\Phi_N
    \eta_t \hat{f}\big): N \in \NN\big)
    \big\|_{\ell^s}^s
    \lesssim B^s
	\big\lVert
	\mathcal F^{-1}\big(\eta_t \hat{f} \big)
	\big\rVert_{\ell^s}^s.
	\]
	If $m, m' \in\{1, \ldots, q\}$ then by \eqref{eq:31} we may write
	\begin{multline*}
		\bigg\lVert \mathcal V_r\bigg( \int_{-1/2}^{1/2} 
		e^{-2\pi i \xi(x + m)} 
		\big(1 - e^{2\pi i \xi(m -m')}\big) \Phi_N(\xi) \eta_t(\xi) \hat{f}(\xi) {\rm d}\xi 
		: N \in \NN \bigg)
        \bigg\rVert_{\ell^s(x)}\\
        \leq C B\bigg\| 
		\int_{-1/2}^{1/2} e^{-2\pi i\xi x} \big(1 - e^{-2\pi i \xi(m-m')}\big) \eta_t(\xi) \hat{f}(\xi)
	 	{\rm d}\xi \bigg\|_{\ell^s(x)}.
	\end{multline*}
	Since $\eta_t = \eta_t \eta_{t-1}$, by Minkowski's inequality and \eqref{eq:140}
	the last expression may be dominated by
	\begin{multline*}
		C B\bigg\|
		\int_{-1/2}^{1/2}
		e^{-2\pi i \xi x}
		\big(1  - e^{-2\pi i \xi(m-m')}\big)
		\eta_{t-1}(\xi) \hat{f}(\xi){\rm d}\xi
		\bigg\|_{\ell^1(x)}
		\big\lVert
		\mathcal F^{-1}\big(\eta_t \hat{f} \big)
		\big\rVert_{\ell^s}\\
		\leq 
		q C B D^{-t-1}
		\big\lVert
		\mathcal F^{-1}\big(\eta_t \hat{f}\big)
		\big\rVert_{\ell^s}.
	\end{multline*}
	Since $q < 2^{t+1}$ we have $q^2 \leq D^{t+1}$, thus
	\[
	J_m \leq J_{m'} + q^{-1} C B
	\big\lVert
	\mathcal F^{-1}\big(\eta_t \hat{f} \big)
	\big\rVert_{\ell^s}.
	\]
	Raising to $s$'th power and summing up over $m' \in \{1, \ldots, q\}$ we get
	\[
	q J_m^s \leq 2^{s-1} I^s
	+ 2^{s-1} q^{1 - s} C^s B^s 
	\big\lVert
	\mathcal F^{-1}\big(\eta_t \hat{f} \big)
	\big\rVert_{\ell^s}^s
	\lesssim B^s
	\big\lVert
	\mathcal F^{-1}\big(\eta_t \hat{f} \big)
	\big\rVert_{\ell^s}^s.
	\]
	Finally, by \cite[Lemma 2]{mt2}, we have
	\[
	\big\lVert
	\mathcal{F}^{-1} \big(\eta_t \hat{f}\big)(q x + m)
	\big\rVert_{\ell^{s}(x)}
	\simeq
	q^{-1/s}
	\big\lVert
	\mathcal{F}^{-1} \big(\eta_t \hat{f}\big)
	\big\rVert_{\ell^{s}}
	\]
	which completes the proof.
\end{proof}
\begin{proposition}
	\label{prop:10}
	Suppose $\big(\Phi_N: N\in\NN\big)$ is a sequence of functions on $\RR$ such that for some
	$s \in [1, \infty)$ and $r \ge 1$ there is a constant $B>0$ so that for any 
	$f\in L^s(\RR) \cap L^2(\RR)$
	\begin{align*}
		\big\|
		\mathcal V_r\big(\mathcal F^{-1}\big(\Phi_N\mathcal F f\big): N\in \NN\big)
		\big\|_{L^s}
		\le B\|f\|_{L^s}.
	\end{align*} 
	Then for every $\epsilon>0$ there exists a constant
	$C_\epsilon > 0$ such that for each $t\in \NN_0$ and $q\in [2^t, 2^{t+1})$, and any 
	$f\in\ell^1(\ZZ)$ we have
	\begin{align}
		\label{eqm:12}
		\Big\lVert 
		\mathcal V_r \Big( \sum_{a \in A_q}
		\mathcal{F}^{-1} \big(\Phi_N(\cdot-a/q) \eta_t(\cdot-a/q)
		\hat{f}\big) : N\in\NN\Big) 
		\Big\rVert_{\ell^s}\leq
		C_\epsilon
		B 
		q^{\epsilon}
		\|f\|_{\ell^s}.
	\end{align}
\end{proposition}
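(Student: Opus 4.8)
The plan is to split the sum over $A_q$ into sums over \emph{complete} residue systems, for which the relevant frequency supports form a genuine periodic comb that can be treated by a sampling argument with no loss in $q$, the factor $q^\epsilon$ coming only from the number of divisors of $q$. \emph{First step: a M\"obius decomposition.} Using $\ind{(a,q)=1}=\sum_{d\mid(a,q)}\mu(d)=\sum_{d\mid q,\ d\mid a}\mu(d)$ and writing $a=db$, $q'=q/d$ (so that $db/q=b/q'$), one has
\[
\sum_{a\in A_q}\mathcal F^{-1}\big(\Phi_N(\cdot-a/q)\eta_t(\cdot-a/q)\hat f\big)
=\sum_{d\mid q}\mu(d)\sum_{b=1}^{q'}\mathcal F^{-1}\big(\Phi_N(\cdot-b/q')\eta_t(\cdot-b/q')\hat f\big),
\]
where for each $d$ the index $b$ now runs over a complete residue system modulo $q'$. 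By the triangle inequality for $\mathcal V_r$ and for $\|\cdot\|_{\ell^s}$ and the divisor bound \eqref{eq:6}, it will be enough to show that there is an absolute constant $C$ with
\[
\Big\|\mathcal V_r\Big(\sum_{b=1}^{q'}\mathcal F^{-1}\big(\Phi_N(\cdot-b/q')\eta_t(\cdot-b/q')\hat f\big):N\in\NN\Big)\Big\|_{\ell^s}\le CB\|f\|_{\ell^s}
\]
for every $q'\in[1,2^{t+1})$ (recall $q'\le q<2^{t+1}$).

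\emph{Second step: sampling over a complete residue system.} Fix $q'$ and put $m_N:=\sum_{b=1}^{q'}(\Phi_N\eta_t)(\cdot-b/q')$, the $1/q'$-periodization of $\Phi_N\eta_t$; since $D>32$ and $q'<2^{t+1}$, the support of $\eta_t$ is contained in $(-1/(2q'),1/(2q'))$, so the translates are pairwise disjoint. I would decompose $\ell^s(\ZZ)$ over residue classes modulo $q'$ and change variables $\xi\mapsto\xi+b/q'$ in each summand; on the class $q'x+m$ this yields
\[
\mathcal F^{-1}\big(m_N\hat f\big)(q'x+m)=\sum_{b=1}^{q'}e^{-2\pi ibm/q'}\,\mathcal F^{-1}\big(\Phi_N\eta_t\widehat{f_b}\big)(q'x+m),\qquad f_b(n):=e^{2\pi ibn/q'}f(n),
\]
with the sum over $b$ a full discrete Fourier transform on $\ZZ/q'\ZZ$. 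Because the Fourier supports have width $\ll 1/q'$, the kernels $\mathcal F^{-1}(\Phi_N\eta_t\widehat{f_b})$ are essentially constant on blocks of length $q'$, quantitatively via \eqref{eq:140}, so—arguing exactly as in the proof of Proposition~\ref{prop:3}, which uses only $q'<2^{t+1}$—the error terms are harmless and this complete Fourier transform reassembles into a single multiplier operator. One is thereby reduced to the $\ell^s(\ZZ)$-variational estimate for the dilated multiplier $\Phi_N(\cdot/q')$ applied to a function $g$ with $\|g\|_{\ell^s}\lesssim\|f\|_{\ell^s}$ (the latter since $\sum_b\eta_t(\cdot-b/q')$ is a $1/q'$-periodic smooth multiplier and is therefore uniformly bounded on $\ell^s$). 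As $L^s(\RR)$ operator norms are dilation invariant, the transference principle behind Proposition~\ref{prop:3} delivers this estimate with $C$ independent of $q'$, which gives \eqref{eqm:12}.

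\emph{The main difficulty.} The delicate point is the reassembly in the second step. Applying Proposition~\ref{prop:3} term by term in $b$ and summing via the triangle inequality loses a full power of $q'$ as soon as $s\ne2$, since the $q'$ summands may all be of comparable size; one has to retain the cancellation encoded in the discrete Fourier transform over the complete residue system. For $s=2$ this is simply Plancherel together with the disjointness of the frequency supports; for general $s$ it is precisely what the sampling argument must supply, and it is the completeness of the residue system—equivalently, the $1/q'$-periodicity of $m_N$—that keeps the bound uniform in $q'$. Consequently the only $q^\epsilon$ loss enters through the divisor bound \eqref{eq:6} in the first step.
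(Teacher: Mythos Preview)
Your M\"obius decomposition in the first step is exactly what the paper does (the paper writes it as $\sum_{a\in A_q}F(a/q)=\sum_{b\mid q}\mu(q/b)\sum_{a=1}^{b}F(a/b)$, which is your identity after relabelling), and the reduction to a uniform bound for a \emph{complete} residue system, with the only $q^{\epsilon}$ loss coming from the divisor bound \eqref{eq:6}, is likewise the paper's strategy.

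The executions of the second step differ. You propose to redo the sampling argument and pass to a dilated multiplier $\Phi_N(\cdot/q')$ acting on the restrictions $g_m(x)=f(q'x+m)$; this works, but it forces you to reopen the proof of Proposition~\ref{prop:3} and track the dilation through the $\eta_{t}/\eta_{t-1}$ mechanism. The paper instead applies Proposition~\ref{prop:3} \emph{once}, as a black box, after first collapsing the frequency-shift sum: splitting $\ZZ$ into residues modulo $q$ (not $q'$), a change of variable gives
\[
\sum_{a=1}^{b}\mathcal F^{-1}\big(\Phi_N(\cdot-a/b)\eta_t(\cdot-a/b)\hat f\big)(qj+l)
=\mathcal F^{-1}\big(\Phi_N\,\eta_t\,F_b(\cdot;l)\big)(qj+l),
\quad F_b(\xi;l)=\sum_{a=1}^{b}e^{-2\pi ila/b}\hat f(\xi+a/b),
\]
and Proposition~\ref{prop:3} (applied with $\hat f$ replaced by $F_b(\cdot;l)$) bounds the variational norm on the progression $qj+l$ by $CB\,\|\mathcal F^{-1}(\eta_t F_b(\cdot;l))(qj+l)\|_{\ell^s(j)}$. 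Because $b\mid q$, the phases cancel exactly on that progression and the right-hand side reassembles over $l$ to $\|\mathcal F^{-1}(\sum_{a=1}^{b}\eta_t(\cdot-a/b))\ast f\|_{\ell^s}$, which by Young is at most $\|\mathcal F^{-1}(\sum_{a=1}^{b}\eta_t(\cdot-a/b))\|_{\ell^1}\|f\|_{\ell^s}$; this $\ell^1$ norm is computed to be $b\,\|\mathcal F^{-1}\eta_t(b\,\cdot)\|_{\ell^1}\lesssim 1$ uniformly in $b$. So the ``main difficulty'' you flag---losing a power of $q'$ by treating the shifts separately---is simply avoided: the sum over shifts is absorbed into $F_b$ \emph{before} Proposition~\ref{prop:3} is invoked, and the reassembly on the output side is an identity, not an approximation via \eqref{eq:140}. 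Your route is a legitimate alternative, but the paper's packaging is shorter and never leaves the statement of Proposition~\ref{prop:3}.
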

\begin{proof}
	Let us recall that for any function $F$, by  the M\"obius inversion formula, we have
	\[
	\sum_{a \in A_q} F(a/q)=\sum_{b \mid q}\mu(q/b)\sum_{a=1}^b F(a/b).
	\]
	Therefore,  we obtain
	\begin{multline*}
		\Big\lVert \mathcal V_r \Big( \sum_{a \in A_q} \mathcal{F}^{-1}
		\big(\Phi_N(\cdot-a/q) \eta_t(\cdot-a/q) \hat{f}\big) : N\in\NN\Big)
		\Big\rVert_{\ell^s}\\
		\leq 
		\sum_{b \mid q} \bigg( \sum_{l=1}^q 
		\Big\lVert 
		\mathcal V_r\big(
		\mathcal{F}^{-1} \big(\Phi_N \eta_t F_b(\cdot\ ; l) \big) (q j + l) : N\in\NN
		\big) \Big\rVert_{\ell^s(j)}^s \bigg)^{1/s},
	\end{multline*}
	where for $b \mid q$ we have set
	\[
	F_b(\xi; l) = \sum_{a=1}^b \hat{f}(\xi + a/b) e^{-2\pi i l a/b}.
	\]
	Now, by Proposition \ref{prop:3} we can estimate
	\[
	\bigg(
	\sum_{l=1}^q
	\Big\lVert
	\mathcal V_r\big(
		\mathcal{F}^{-1} \big(\Phi_N \eta_t F_b(\cdot\ ; l) \big) (q j + l)
	: N\in\NN\big)
	\Big\rVert_{\ell^s(j)}^s
	\bigg)^{1/s}
	\lesssim B
	\bigg(
	\sum_{l=1}^q
	\Big\lVert
	\mathcal{F}^{-1} \big(\eta_t F_b(\cdot\ ; l) \big) (q j + l)
	\Big\rVert_{\ell^s(j)}^s
	\bigg)^{1/s}.
	\]
	Applying Minkowski's inequality we get
	\[
	\bigg(
	\sum_{l=1}^q
	\Big\lVert
	\mathcal{F}^{-1} \big(\eta_t F_b(\cdot\ ; l) \big) (q j + l)
	\Big\rVert_{\ell^s(j)}^s
	\bigg)^{1/s}
	\leq
	\Big\lVert
	\mathcal{F}^{-1} \Big(\sum_{a=1}^b \eta_t(\cdot - a/b) \Big)
	\Big\rVert_{\ell^1}
	\vnorm{f}_{\ell^s}.
	\]
	Finally, 
	\[
	\Big\lVert
	\mathcal{F}^{-1} \Big(\sum_{a=1}^b \eta_t(\cdot - a/b) \Big)
	\Big\rVert_{\ell^1}
	=
	\Big\lVert
	\mathcal{F}^{-1} \eta_t(j)
	\sum_{a=1}^b e^{-2\pi i j a/b}
	\Big\rVert_{\ell^1(j)}
	=
	b \vnorm{\mathcal{F}^{-1} \eta_t(bj)}_{\ell^1(j)}
	\]
	and since, by \cite[Lemma 2]{mt2} and \eqref{eq:130}
 	\begin{equation*}
		b \vnorm{\mathcal{F}^{-1} \eta_t(bj)}_{\ell^1(j)}
		\lesssim \vnorm{\mathcal{F}^{-1} \eta_t}_{\ell^1}
		\lesssim 1,
    \end{equation*}
	we obtain
    \[
    \bigg( \sum_{l=1}^q \Big\lVert \mathcal V_r\big(
    \mathcal{F}^{-1} \big(\Phi_N \eta_t F_b(\cdot\ ; l)
    \big) (q j + l) : N\in\NN\big) \Big\rVert_{\ell^s(j)}^s
    \bigg)^{1/s} \lesssim B\|f\|_{\ell^s}
    \]
	which, together with \eqref{eq:6} concludes the proof.
\end{proof}
Since we treat $[0, 1]$ as the circle group $\TT$, let us define  $\mathscr
R_0=\{0\}$. For $t \in \NN$ we set
\[
\mathscr{R}_t =
\big\{
	a/q \in \TT \cap\mathbb{Q}:
	2^t \leq q < 2^{t+1} \text{ and } (a, q) =1
\big\}.
\]
For a given sequence $\big(\Phi_N : N \in \NN\big)$ of functions on $\RR$ and $t \in \NN_0$
we define a sequence $\big(\nu_N^t : N \in \NN\big)$ of Fourier multipliers on $\TT$ by setting
\[
	\nu_N^t(\xi) = 
	\sum_{a/q \in \mathscr{R}_t} \frac{\mu(q)}{\varphi(q)} \Phi_N(\xi - a/q) \eta_t(\xi - a/q).
\]
\begin{theorem}
	\label{thm:11}
	Let $\big(\Phi_N: N\in\NN\big)$ be a sequence of functions such that for some
	$s \in [1, \infty)$ and $r\ge 1$ there is a constant $B>0$ such that for any
	$f \in L^s(\RR) \cap L^2(\RR)$
	\begin{align}
		\label{eq:26}
		\big\|
		\mathcal V_r\big(\mathcal F^{-1}\big(\Phi_N\mathcal F f\big): N\in \NN\big)
		\big\|_{L^s}
		\le B\|f\|_{L^s}.
	\end{align}  
	Then there exists a constant $C > 0$ and $\delta>0$ such that for
	each $t\in \NN_0$ and any $f \in \ell^1(\ZZ)$ we have
	\[
		\big\lVert 
		\mathcal V_r \big(\mathcal F^{-1}\big(\nu_{N}^{t}\hat{f}\big): N\in\NN\big)
		\big\rVert_{\ell^s}
		\leq
		C B 2^{-t\delta}\|f\|_{\ell^s}.
	\]
\end{theorem}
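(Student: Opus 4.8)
The plan is to exploit the fact that the Ramanujan sum $\sum_{a\in A_q}e^{2\pi i r a/q}=\mu(q)$ lets us rewrite the multiplier $\nu_N^t$ so that Proposition~\ref{prop:10} applies. Indeed, since for $q$ in the range $[2^t,2^{t+1})$ we have $\mu(q)/\varphi(q)=\varphi(q)^{-2}\sum_{a\in A_q}\mu(q)$, but more directly we can compare
\[
\nu_N^t(\xi)=\sum_{a/q\in\mathscr R_t}\frac{\mu(q)}{\varphi(q)}\Phi_N(\xi-a/q)\eta_t(\xi-a/q)
\]
with the ``un-normalized'' multiplier $\sum_{q\in[2^t,2^{t+1})}\varphi(q)^{-1}\sum_{a\in A_q}\Phi_N(\xi-a/q)\eta_t(\xi-a/q)$ that Proposition~\ref{prop:10} controls. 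The point is that $|\mu(q)/\varphi(q)|\le\varphi(q)^{-1}$ and, by \eqref{eq:5}, $\varphi(q)^{-1}\lesssim_\epsilon q^{-1+\epsilon}\lesssim_\epsilon 2^{-t(1-\epsilon)}$. So first I would split the sum over $\mathscr R_t$ according to the exact value of the denominator $q$; for each fixed $q\in[2^t,2^{t+1})$, Proposition~\ref{prop:10} with that $q$ gives
\[
\Big\lVert \mathcal V_r\Big(\sum_{a\in A_q}\mathcal F^{-1}\big(\Phi_N(\cdot-a/q)\eta_t(\cdot-a/q)\hat f\big):N\in\NN\Big)\Big\rVert_{\ell^s}\le C_\epsilon B q^\epsilon\|f\|_{\ell^s}.
\]

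Next I would sum these estimates over $q$. The triangle inequality for the variation seminorm $\mathcal V_r$ gives
\[
\big\lVert\mathcal V_r\big(\mathcal F^{-1}(\nu_N^t\hat f):N\in\NN\big)\big\rVert_{\ell^s}
\le\sum_{q=2^t}^{2^{t+1}-1}\frac{1}{\varphi(q)}\,C_\epsilon B q^\epsilon\|f\|_{\ell^s}.
\]
Using \eqref{eq:5} to bound $\varphi(q)^{-1}\lesssim_\epsilon q^{-1+\epsilon}$, the summand is $\lesssim_\epsilon B q^{-1+2\epsilon}\|f\|_{\ell^s}$, and since there are at most $2^t$ values of $q$, all comparable to $2^t$, the total is $\lesssim_\epsilon B\, 2^t\cdot 2^{t(-1+2\epsilon)}\|f\|_{\ell^s}=B\,2^{2t\epsilon}\|f\|_{\ell^s}$. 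That is the wrong direction — it does not decay. So the crude term-by-term bound is too lossy; I need to use the cancellation in $\mu(q)$, i.e.\ that $\mu(q)=0$ unless $q$ is squarefree, together with a better-than-trivial count, OR sum the Ramanujan sums before applying Proposition~\ref{prop:10}.

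The better route, which I expect to be the crux, is to \emph{not} separate the $q$'s but instead recognize that for squarefree $q$ the coefficient $\mu(q)$ is itself a Ramanujan sum over $a$, so $\nu_N^t$ is a single Gauss-sum-weighted object amenable to Proposition~\ref{prop:10} applied once with the factor $q^\epsilon$ absorbed against $\varphi(q)^{-1}\lesssim_\epsilon 2^{-t(1-\epsilon)}$. Concretely, fixing $q$ and applying Proposition~\ref{prop:10}, then multiplying by $|\mu(q)|/\varphi(q)\le\varphi(q)^{-1}\lesssim_\epsilon 2^{-t(1-\epsilon)}$, gives for each $q$ a bound $C_\epsilon B\,2^{-t(1-2\epsilon)}\|f\|_{\ell^s}$; but now, rather than summing $2^t$ such terms, I would observe that the supports $\eta_t(\cdot-a/q)$ for distinct fractions $a/q\in\mathscr R_t$ are \emph{disjoint}: by construction $\eta_t$ is supported in a ball of radius $D^{-t-2}/(2\pi)\cdot(2\pi)=\tfrac12 D^{-t-2}$, and two distinct rationals with denominators $<2^{t+1}$ differ by at least $2^{-2(t+1)}\gg D^{-t-2}$ since $D>32>16$ ensures $D^{-t-2}\ll 2^{-2t-2}$. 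Disjointness of the frequency supports means $\mathcal F^{-1}(\nu_N^t\hat f)$ decouples, up to the rapid off-diagonal decay of $\mathcal F^{-1}\eta_t$, into pieces indexed by $q$ each of which is governed by the single-$q$ estimate; a standard almost-orthogonality / Rademacher-randomization argument (as in the proof of Proposition~\ref{prop:10}, using that the kernels $\mathcal F^{-1}\eta_t$ are $\ell^1$-normalized uniformly in $t$ by \eqref{eq:130}) then yields
\[
\big\lVert\mathcal V_r\big(\mathcal F^{-1}(\nu_N^t\hat f):N\in\NN\big)\big\rVert_{\ell^s}
\lesssim_\epsilon B\Big(\sum_{q\in[2^t,2^{t+1})}\varphi(q)^{-2}q^{2\epsilon}\Big)^{1/2}\|f\|_{\ell^s}
\lesssim_\epsilon B\,2^{-t(1/2-2\epsilon)}\|f\|_{\ell^s},
\]
which is the desired bound with $\delta=1/2-2\epsilon>0$ for $\epsilon$ small. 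The main obstacle is making this orthogonality rigorous for the \emph{nonlinear} object $\mathcal V_r$: one cannot simply use Plancherel. The standard fix is to replace $\mathcal V_r$ by its linearization (choose the increasing sequence $n_0<\dots<n_J$ pointwise, turning $\mathcal V_r$ into an $\ell^r$-valued linear operator), apply the vector-valued almost-orthogonality, and note the constants are independent of the linearization — exactly the technique underlying Proposition~\ref{prop:3}. Thus the proof reduces to: (i) disjoint-support bookkeeping for $\eta_t(\cdot-a/q)$; (ii) one invocation of Proposition~\ref{prop:10} per denominator $q$; (iii) summing the $\varphi(q)^{-2}q^{2\epsilon}$ via \eqref{eq:5}; (iv) linearizing $\mathcal V_r$ and handling the harmless off-support tails with the $\ell^1$ bound \eqref{eq:130}.
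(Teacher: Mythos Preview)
Your first computation is correct and is exactly the paper's inequality~\eqref{eq:125}: splitting over $q\in[2^t,2^{t+1})$, applying Proposition~\ref{prop:10}, and using $|\mu(q)|/\varphi(q)\lesssim_\epsilon q^{-1+\epsilon}$ gives $\lesssim_\epsilon B\,2^{t\epsilon}\|f\|_{\ell^s}$ after relabeling $\epsilon$. You are also right that the refinement comes from the disjointness of the supports of $\eta_t(\cdot-a/q)$ as $a/q$ ranges over $\mathscr R_t$. The gap is in how you cash in that disjointness.

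Disjoint Fourier supports yield genuine orthogonality only on $\ell^2$, via Plancherel. The bound $\big(\sum_q \varphi(q)^{-2}q^{2\epsilon}\big)^{1/2}$ you write down is an $\ell^2$-in-$q$ quantity, and no ``Rademacher-randomization'' or linearization of $\mathcal V_r$ will produce it in $\ell^s$ for $s\neq 2$: randomizing the signs over $q$ would at best compare $\|\sum_q F_q\|_{\ell^s}$ with a square function $\|(\sum_q|F_q|^2)^{1/2}\|_{\ell^s}$, and there is no mechanism to bound the latter by a single $\|f\|_{\ell^s}$ with the gain you claim. The paper does exactly what you discarded as a dead end: it keeps the lossy estimate $C_\epsilon B\,2^{t\epsilon}\|f\|_{\ell^s}$ as one interpolation endpoint, and then \emph{only for $s=2$} runs the disjoint-support argument. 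Concretely, one replaces $\hat f$ by $G_q(\xi)=\sum_{a\in A_q}\eta_{t-1}(\xi-a/q)\hat f(\xi)$, applies Proposition~\ref{prop:10} to get $\lesssim Bq^{\epsilon}\|\mathcal F^{-1}G_q\|_{\ell^2}$, sums over $q$ with Cauchy--Schwarz, and uses Plancherel together with the disjointness to bound $\sum_{a/q\in\mathscr R_t}\|\mathcal F^{-1}(\eta_{t-1}(\cdot-a/q)\hat f)\|_{\ell^2}^2\le\|f\|_{\ell^2}^2$. This gives $\lesssim B\,2^{-t/4}\|f\|_{\ell^2}$, and interpolating against~\eqref{eq:125} yields the $2^{-t\delta}$ decay for general $s$. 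In short: your ``wrong-direction'' bound is not a failure, it is half of the proof; the other half is the $\ell^2$ refinement, and the two are glued by interpolation, not by almost-orthogonality in $\ell^s$.
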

\begin{proof}
	Proposition \ref{prop:10} and \eqref{eq:5} immediately imply that
	\begin{align}
		\label{eq:125}
		\big\lVert \mathcal V_r 
		\big(\mathcal F^{-1}\big(\nu_{N}^{t}\hat{f}\big): N\in\NN\big)
        \big \rVert_{\ell^s}
		\leq C_\epsilon B 2^{t\epsilon}
		\|f\|_{\ell^s}
	\end{align}
	for any $\epsilon>0$. In particular we have this bound for $s=2$, however it can be refined 
	(see also \cite{bou1}). Namely, one can write
    \begin{align*}
    	\sum_{a \in A_q} \mathcal{F}^{-1} \big(\Phi_N(\cdot -a/q) \eta_t(\cdot-a/q) \hat{f} \big)
		=\sum_{a \in A_q} \mathcal{F}^{-1} \big(\Phi_N(\cdot - a/q) \eta_t(\cdot -a/q) G_q\big)
    \end{align*}
	where
    \[
	G_q(\xi) = \sum_{a \in A_q} \eta_{t-1}(\xi - a/q) \hat{f}(\xi),
	\]
	since $\eta_t = \eta_t \eta_{t-1}$, and the supports of $\eta_t(\cdot -a/q)$'s are disjoint
	when $a/q$ varies over $\mathscr{R}_t$. By \eqref{eqm:12}, we have
	\[
	\Big\lVert
	\mathcal V_r\Big( \sum_{a \in A_q} 
	\mathcal{F}^{-1} (\Phi_N(\cdot -a/q) \eta_t(\cdot -a/q) G_q) : N\in\NN\Big)
	\Big\rVert_{\ell^2}
	\lesssim
	B
	q^{\epsilon} 
	\vnorm{\mathcal{F}^{-1} G_q}_{\ell^2}.
	\]
	Thus
	\begin{multline*}
		\big\lVert \mathcal 
		V_r\big( \mathcal{F}^{-1}\big(\nu^t_N \hat{f}\big) : N\in\NN\big) \big\rVert_{\ell^2} 
		\leq 
		\sum_{q = 2^t}^{2^{t+1}-1}
		q^{-1+\epsilon}
		\Big\lVert 
		\mathcal V_r\Big(\sum_{a \in A_q} 
		\mathcal{F}^{-1} (\Phi_N(\cdot -a/q) \eta_t(\cdot -a/q) \hat{f}) : N\in\NN 
		\Big) 
		\Big\rVert_{\ell^2}\\
		\lesssim 
		B \sum_{q = 2^t}^{2^{t+1}-1} q^{-1+2\epsilon} 
		\vnorm{\mathcal{F}^{-1} G_q}_{\ell^2}\\
		\lesssim 
		B 2^{-t/2 + 2\epsilon t} 
		\Big( \sum_{a/q \in \mathscr{R}_t} \big\lVert \mathcal{F}^{-1} 
		\big(\eta_{t-1}(\cdot-a/q) \hat{f}\big)
		\big\rVert_{\ell^2}^2 \Big)^{1/2}
	\end{multline*}
    where the last estimate follows from Cauchy--Schwarz inequality and the definition of $G_q$.
	Finally, the last sum can be dominated by $\|f\|_{\ell^2}$. Hence, for appropriately chosen
	$\epsilon>0$, we obtain
	\begin{equation}
		\label{eq:23}
		\big\lVert
		\mathcal V_r\big(
		\mathcal{F}^{-1}\big(\nu_{N}^{t} \hat{f}\big)
		: N\in\NN\big)
		\big\rVert_{\ell^2} 
		\leq B 2^{-t/4} \vnorm{f}_{\ell^2}.
	\end{equation}
	To finish the proof, for $s \neq 2$ we interpolate between \eqref{eq:125} and \eqref{eq:23}.
\end{proof}

\section{Approximations of the kernels}
\label{sec:3}
To approximate the multipliers corresponding with \eqref{eq:3} and \eqref{eq:4} we adopt
the argument introduced by Bourgain in \cite{bou-p} (see also Wierdl \cite{wrl}) which is based
on Hardy and Littlewood circle method (see e.g \cite{vau}).

For any $\alpha >0$ and $N \in \NN$ {\it major arcs} are defined by
\[
\mathfrak{M}_N^\alpha 
= \bigcup_{1\le q \leq (\log N)^{\alpha}} \bigcup_{a \in A_q} \mathfrak{M}_N^\alpha(a/q)
\]
where
\[
\mathfrak{M}_N^\alpha(a/q) =
\big\{
	\xi \in \TT:
	\abs{\xi - a/q} \leq N^{-1} (\log N)^\alpha
\big\}.
\]
The set $\mathfrak{m}_N^\alpha = \TT \setminus \mathfrak{M}_N^\alpha$ is called {\it minor arc}.
\begin{theorem}
	\label{thm:2}
	Let $\big(m_N: N\in\NN\big)$ be a sequence of Fourier multipliers on $\TT$.
	Suppose there is a sequence $\big(\Psi_N : N \in \NN\big)$ of functions on 
	$\RR$ such that
	\begin{align}
		\abs{\Psi_N(\xi)} \lesssim \min\{1, N^{-1} \abs{\xi}^{-1}\}.
	\end{align}
	Assume that for each $\alpha > 32$ there is $B_\alpha > 0$ such that for all $N \in \NN$
	\begin{align}
		\label{eq:105}
		\Big\lvert
		m_N(\xi) - \frac{\mu(q)}{\varphi(q)} \Psi_N(\xi - a/q)
		\Big\rvert
		&\leq B_\alpha (\log N)^{-\alpha/8},
		\quad \text{ if }\xi \in \mathfrak{M}_N^\alpha(a/q) \cap \mathfrak{M}_N^\alpha,\\
		\label{eq:107}
		|m_N(\xi)|
		&\leq B_\alpha (\log N)^{-\alpha/8}, \quad \text{ if }
		\xi \in \mathfrak{m}_N^\alpha.
	\end{align}
	Then for each $\alpha > 32$ there is a constant $C_{\alpha}>0$ such that for all $N \in \NN$
	\[
		\sup_{\xi \in \TT} 
		\Big\lvert m_N(\xi) - \sum_{t \geq 0} \psi_N^t(\xi) \Big\rvert 
		\leq C_\alpha (\log N)^{-\alpha/8}
	\]
	where $\psi_N^t$ is a Fourier multiplier on $\TT$ defined for $t \in \NN_0$ and $N \in \NN$
	by
	\begin{equation}
		\label{eq:12}
		\psi_N^t(\xi) = \sum_{a/q \in \mathscr{R}_t} \frac{\mu(q)}{\varphi(q)}
		\Psi_N(\xi - a/q) \eta_t(\xi-a/q).
	\end{equation}
\end{theorem}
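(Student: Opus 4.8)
The plan is to decompose $\TT$ into the major arcs $\mathfrak{M}_N^\alpha$ and the minor arc $\mathfrak{m}_N^\alpha$, and to estimate $m_N(\xi)-\sum_{t\ge0}\psi_N^t(\xi)$ pointwise on each piece. On the minor arc the hypothesis \eqref{eq:107} already gives $|m_N(\xi)|\lesssim_\alpha(\log N)^{-\alpha/8}$, so the real work is to show that $\sum_{t\ge0}\psi_N^t(\xi)$ is also $O_\alpha((\log N)^{-\alpha/8})$ there, and that on the major arcs the sum reproduces $m_N$ up to the same error.

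First I would analyze the supports. For a fixed $\xi$, the function $\Psi_N(\xi-a/q)\eta_t(\xi-a/q)$ is nonzero only when $|\xi-a/q|\le \tfrac12(2\pi D^{t+2})^{-1}$, i.e. $\xi$ lies in a tiny neighborhood of $a/q$ with $a/q\in\mathscr{R}_t$. Because $D>32$ these neighborhoods are far smaller than the spacing $1/q^2\ge 4^{-t-1}$ between distinct fractions with denominator in $[2^t,2^{t+1})$, and (with a little Dirichlet-type bookkeeping) two fractions $a/q,a'/q'$ from different blocks $\mathscr{R}_t,\mathscr{R}_{t'}$ whose $\eta$-cutoffs both contain $\xi$ must in fact coincide. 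Hence for each $\xi$ at most one term in the double sum $\sum_{t\ge0}\sum_{a/q\in\mathscr{R}_t}$ is nonzero, so $\sum_{t\ge0}\psi_N^t(\xi)$ equals either $0$ or a single $\tfrac{\mu(q)}{\varphi(q)}\Psi_N(\xi-a/q)\eta_t(\xi-a/q)$.

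Next I would split according to the size of the unique relevant denominator $q$, using the natural threshold $q\le(\log N)^\alpha$ versus $q>(\log N)^\alpha$. If $\xi$ is within $\tfrac12(2\pi D^{t+2})^{-1}$ of some $a/q$ with $q\le(\log N)^\alpha$, then in particular $|\xi-a/q|\le N^{-1}(\log N)^\alpha$ (once $N$ is large, since $D^{-t-2}$ decays geometrically in $q$ while $(\log N)^\alpha/N$ decays much faster only in $N$ — here one checks the cutoff scale is $\ge$ the major-arc scale, so we are safely inside $\mathfrak{M}_N^\alpha(a/q)\cap\mathfrak{M}_N^\alpha$), so \eqref{eq:105} applies and $|m_N(\xi)-\tfrac{\mu(q)}{\varphi(q)}\Psi_N(\xi-a/q)|\lesssim_\alpha(\log N)^{-\alpha/8}$; the extra error from inserting the cutoff, namely $\tfrac{\mu(q)}{\varphi(q)}\Psi_N(\xi-a/q)(1-\eta_t(\xi-a/q))$, is controlled because on $\operatorname{supp}(1-\eta_t)$ we have $|\xi-a/q|\gtrsim D^{-t-2}$, whence $|\Psi_N(\xi-a/q)|\lesssim N^{-1}D^{t+2}\lesssim N^{-1}q^{O(1)}\lesssim N^{-1}(\log N)^{O(\alpha)}$, which beats $(\log N)^{-\alpha/8}$ for large $N$. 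If instead the unique relevant $q$ satisfies $q>(\log N)^\alpha$, or there is no relevant fraction at all, then $\sum_t\psi_N^t(\xi)$ is either $0$ or bounded by $\tfrac{1}{\varphi(q)}\lesssim q^{-1+\epsilon}\lesssim(\log N)^{-\alpha(1-\epsilon)}$ using \eqref{eq:5}, which is $\lesssim(\log N)^{-\alpha/8}$; and such a $\xi$ cannot lie in $\mathfrak{M}_N^\alpha$ (else it would be near a fraction with small denominator, and the support analysis would force $q$ small), so $\xi\in\mathfrak{m}_N^\alpha$ and \eqref{eq:107} bounds $|m_N(\xi)|$ by the same quantity. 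Combining all cases yields the asserted uniform bound.

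The main obstacle I expect is the bookkeeping in the support/uniqueness argument: one must verify carefully, using $D>32$ and the spacing bounds \eqref{eq:5}, \eqref{eq:6} on Farey fractions, that the $\eta_t(\cdot-a/q)$-supports over all $t\ge0$ and all $a/q\in\mathscr{R}_t$ are genuinely pairwise disjoint (this is exactly the disjointness invoked in the proof of Theorem~\ref{thm:11}), and to reconcile the two different scales in play — the cutoff scale $D^{-t-2}$, which depends on the denominator $q\simeq 2^t$, and the major-arc scale $N^{-1}(\log N)^\alpha$, which depends on $N$ — so that for $q\le(\log N)^\alpha$ and $N$ large the cutoff support sits comfortably inside the corresponding major arc. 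Once the geometry is pinned down, the remaining estimates are the routine applications of the decay hypothesis on $\Psi_N$, the lower bound \eqref{eq:5} on $\varphi(q)$, and the hypotheses \eqref{eq:105}–\eqref{eq:107}.
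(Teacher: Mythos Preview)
The central step of your argument---that for each $\xi$ the double sum $\sum_{t\ge 0}\sum_{a/q\in\mathscr{R}_t}$ contributes at most one nonzero term---is false, and the rest of the proposal is built on it. The condition $D>32$ guarantees that for a \emph{fixed} $t$ the supports of $\eta_t(\cdot-a/q)$, $a/q\in\mathscr{R}_t$, are pairwise disjoint (this is exactly what is used in the proof of Theorem~\ref{thm:11}); it does \emph{not} give disjointness across different levels $t$. A concrete counterexample: take $\xi=1/q'$ with $q'\in[2^{t'},2^{t'+1})$ and $q'>4\pi D^{2}$. Then $\eta_{t'}(\xi-1/q')=\eta_{t'}(0)=1$, while $\eta_0(\xi-0)=\eta\big(2\pi D^{2}/q'\big)$ is also nonzero since $2\pi D^{2}/q'<1/2$. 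So both the level-$0$ term at $0/1$ and the level-$t'$ term at $1/q'$ survive, and there is no ``unique relevant denominator'' on which to case-split. A related problem is the scale comparison: you write that on the support of $\eta_t(\cdot-a/q)$ with $q\le(\log N)^\alpha$ one has $|\xi-a/q|\le N^{-1}(\log N)^\alpha$, but this is backwards---the cutoff radius $\asymp D^{-t-2}$ is independent of $N$, whereas the major-arc radius $N^{-1}(\log N)^\alpha$ shrinks to $0$ as $N\to\infty$, so lying in the cutoff support does not place $\xi$ in $\mathfrak{M}_N^\alpha(a/q)$.

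The paper's proof avoids both issues by never invoking cross-level disjointness. It takes the major/minor split as primary. On $\mathfrak{M}_N^\alpha(a/q)$ it isolates the single term $\tfrac{\mu(q)}{\varphi(q)}\Psi_N(\xi-a/q)\eta_{t_0}(\xi-a/q)$ and then estimates \emph{all remaining} terms of $\sum_t\psi_N^t(\xi)$ by two mechanisms: for $t$ below a threshold $t_1$ with $2^{t_1}\asymp N(\log N)^{-2\alpha}$, the spacing $|\,a/q-a'/q'\,|\ge 1/(qq')$ forces $|\xi-a'/q'|\gtrsim N^{-1}(\log N)^{\alpha}$, and the decay $|\Psi_N(\theta)|\lesssim (N|\theta|)^{-1}$ makes those terms $\lesssim(\log N)^{-\alpha}$; for $t\ge t_1$ the totient bound \eqref{eq:5} gives $\sum_{t\ge t_1}2^{-\delta t}\lesssim(\log N)^{-\alpha}$. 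The $(1-\eta_{t_0})$ correction is handled exactly as you sketched. On $\mathfrak{m}_N^\alpha$ the same two mechanisms (with threshold $2^{t_1}\asymp(\log N)^{\alpha/2}$) bound $\sum_t\psi_N^t(\xi)$ outright. The missing idea in your proposal is precisely this use of the decay $|\Psi_N(\theta)|\lesssim(N|\theta|)^{-1}$ to kill the low-level extraneous terms, in place of a disjointness that does not hold.
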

\begin{proof}
	Let us notice that for a fixed $t \in \NN$ and $\xi \in [0, 1]$ the sum \eqref{eq:12}
	consists of the single term, since $D > 32$.
	
	\vspace{2ex}
	\noindent{\bf{Major arcs estimates}:
	$\xi\in\mathfrak{M}_N^\alpha(a/q) \cap \mathfrak{M}_N^\alpha$.}
	Let $t_0$ be such that
	\begin{equation}
		\label{eq:35}
		2^{t_0} \leq q < 2^{t_0+1}.
	\end{equation}
	Next, we choose $t_1$ satisfying
	\[
	2^{t_1+1} \leq N(\log N)^{-2\alpha} < 2^{t_1+2}.
	\]
	If $t < t_1$ then for any $a'/q' \in \mathscr{R}_t$, $a'/q' \neq a/q$ we have
	\[
	\Big\lvert 	\xi - \frac{a'}{q'} \Big\rvert
	\geq
	\frac{1}{qq'} - \Big\lvert \xi - \frac{a}{q} \Big\rvert
	\geq
	2^{-t-1} (\log N)^{-\alpha} -N^{-1}(\log N)^\alpha
	\geq
	N^{-1} (\log N)^\alpha.
	\]
	Therefore, the integration by parts gives
	\[
	\abs{\Psi_N(\xi - a'/q')} \lesssim (\abs{\xi - a'/q'}N)^{-1} \lesssim (\log N)^{-\alpha}.
	\]
	Combining the last estimate with \eqref{eq:5}, we obtain that for some $\delta'>0$
	\[
	I_1=\bigg\lvert
	\sum_{t = 0}^{t_1-1} \sum_{\atop{a'/q' \in \mathscr{R}_t}{a'/q' \neq a/q}}
	\frac{\mu(q')}{\varphi(q')} \Psi_N(\xi - a'/q') \eta_t(\xi - a'/q')
	\bigg\rvert
	\lesssim
	(\log N)^{-\alpha}
	\sum_{t=0}^{t_1-1}
	2^{-\delta' t}.
	\]
	Moreover, if $\eta_{t_0}(\xi -a/q) < 1$ then $\abs{\xi - a/q} \geq 4^{-1} D^{-t_0-2}$. By
	\eqref{eq:35} we have $2^{t_0} \leq (\log N)^\alpha$. Hence by the assumptions 
	\[
	I_2=\bigg\lvert
	\frac{\mu(q)}{\varphi(q)} \Psi_N(\xi - a/q) \big(1 - \eta_{t_0}(\xi-a/q)\big)
	\bigg\rvert
	\lesssim
	D^{t_0+2}N^{-1}
	\lesssim
	(\log N)^{-\alpha}.
	\]
	In the last estimate it is important that the implied constant does not depend on $t_0$.
	Since $\Psi_N$ is bounded uniformly with respect to $N \in \NN$, by \eqref{eq:5} and the
	definition of $t_1$ we have
	\[
	I_3=\bigg\lvert
	\sum_{t = t_1}^\infty\sum_{\atop{a'/q' \in \mathscr{R}_t}{a'/q' \neq a/q}}
	\frac{\mu(q')}{\varphi(q')} \Psi_N (\xi - a'/q') \eta_t(\xi - a'/q')
	\bigg\rvert
	\lesssim
	\sum_{t = t_1}^\infty 2^{-\delta'' t}
	\lesssim \big(N^{-1}(\log N)^{2\alpha}\big)^{\delta''}\lesssim (\log N)^{-\alpha}
	\]
	for appropriately chosen $\delta''>0$. Finally, in view of \eqref{eq:105} and
	definitions of $t_0$ and $t_1$ we conclude
	\begin{align*}
		\Big\lvert m_N (\xi) - \sum_{t \geq 0} \psi_N^t(\xi) \Big\rvert
		\leq C_\alpha (\log N)^{-\alpha/8}+ I_1+I_2+I_3\lesssim (\log N)^{-\alpha/8}.
	\end{align*}
	
	\vspace*{2ex}
	\noindent{\bf Minor arcs estimates: $\xi \in \mathfrak{m}_N^\alpha$.}
	By \eqref{eq:107}, it only remains to estimate $\sum_{t \geq 0} \psi_N^t$. Let us define
	$t_1$ by setting
	\[
	2^{t_1} \leq  (\log N)^{\alpha/2} < 2^{t_1+1}.
	\]
	If $a/q \in \mathscr{R}_t$ for $t < t_1$ then $q < (\log N)^{\alpha}$ and
	\[
	\Big \lvert \xi - \frac{a}{q} \Big\rvert
	\geq 2^{-t - 1}N^{-1} (\log N)^{\alpha} \gtrsim N^{-1}(\log N)^{\alpha/2}.
	\]
    Since
	\[
	\abs{\Psi_N(\xi - a/q)} \lesssim (\abs{\xi - a/q} N)^{-1} \lesssim (\log N)^{-\alpha/2}
	\]
	the first part of the sum may be majorized by
	\[
	\Big\lvert \sum_{t = 0}^{t_1-1} \psi_N^t(\xi) \Big\vert
	\lesssim (\log N)^{-\alpha/2} \sum_{t = 0}^{\infty} 2^{-\delta' t}.
	\]
	For the second part, we proceed as for $I_3$ to get
	\[
	\Big\lvert \sum_{t = t_1}^\infty \psi_N^t(\xi)\Big\rvert \lesssim \sum_{t=t_1}^\infty
	2^{-\delta'' t} \lesssim (\log N)^{-\delta'' \alpha/2}\lesssim (\log N)^{-\alpha/8}.
	\]
	A suitable choice of $\delta', \delta''>0$ in both estimates above was possible thanks to
	\eqref{eq:5}.
\end{proof}

\begin{proposition}
	\label{prop:1}
	Let $\alpha > 32$ and $M, N\in\NN$ be such that $N (\log N)^{-\alpha/4} \le M \le N$.
	Suppose that $K$ is a differentiable function on $[M, N]$ satisfying
	\begin{align}
		\label{eq:7}
		\norm{x}\abs{K(x)}+\norm{x}^2\abs{K'(x)} \lesssim 1.
	\end{align}
	Then for each $\alpha > 32$ there is a constant $C_{\alpha}>0$ such that for all
	$\xi \in \mathfrak{M}_N^\alpha(a/q) \cap \mathfrak{M}_N^\alpha$
	\[
	\Big\lvert
	\sum_{p\in\PP_{M, N}} 
	e^{2 \pi i \xi p} K(p)\log p - \frac{\mu(q)}{\varphi(q)} \Phi_{M, N}(\xi - a/q)
	\Big\rvert
	\leq C_\alpha (\log N)^{-\alpha}
	\]
	where $\PP_{M, N}=\PP\cap(M, N]$ and 
	\[
	\Phi_{M, N}(\xi)=\int_M^Ne^{2\pi i \xi t}K(t){\rm d}t.
	\]
	The constant $C_\alpha$ depends only on $\alpha$.
\end{proposition}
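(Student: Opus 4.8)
The plan is to run the Hardy--Littlewood circle method in its classical form, the arithmetic input being the Siegel--Walfisz theorem. Write $\xi=a/q+\beta$, where by the definitions of $\mathfrak M_N^\alpha(a/q)$ and $\mathfrak M_N^\alpha$ we have $\abs{\beta}\le N^{-1}(\log N)^\alpha$ and $q\le(\log N)^\alpha$. Since $M\ge N(\log N)^{-\alpha/4}$, for $N$ larger than a threshold depending on $\alpha$ every prime $p\in\PP_{M,N}$ satisfies $p>q$, hence lies in a residue class $r\bmod q$ coprime to $q$; smaller $N$ are absorbed into $C_\alpha$, because there the exponential sum is $O(\log N)$ and $\frac{\mu(q)}{\varphi(q)}\Phi_{M,N}(\xi-a/q)$ is $O_\alpha(\log\log N)$ by \eqref{eq:7}, both dominated by $C_\alpha(\log N)^{-\alpha}$ once $C_\alpha$ is large. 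After splitting over these classes and factoring out the constant phase $e^{2\pi i(a/q)p}=e^{2\pi i(a/q)r}$, the problem reduces to the sum
\[
S_r=\sum_{\substack{p\in\PP_{M,N}\\ p\equiv r\ (q)}} e^{2\pi i\beta p}\,K(p)\log p,\qquad r\in A_q.
\]

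For the main term I would apply partial summation against the prime-counting function in arithmetic progressions $\vartheta(x;q,r)=\sum_{p\le x,\ p\equiv r\ (q)}\log p$: putting $g(t)=e^{2\pi i\beta t}K(t)$,
\[
S_r=g(N)\vartheta(N;q,r)-g(M)\vartheta(M;q,r)-\int_M^N g'(t)\,\vartheta(t;q,r)\,{\rm d}t.
\]
Inserting the Siegel--Walfisz approximation $\vartheta(t;q,r)=t/\varphi(q)+E_r(t)$, with $\abs{E_r(t)}\lesssim_\alpha t\exp(-c\sqrt{\log t})$ uniformly over $q\le(\log N)^\alpha$ and $t\in[M,N]$, the main part re-assembles---by reversing the same partial summation---into $\varphi(q)^{-1}\int_M^N g(t)\,{\rm d}t=\varphi(q)^{-1}\Phi_{M,N}(\beta)$. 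Summing over $r\in A_q$ against the weights $e^{2\pi i(a/q)r}$ and invoking Ramanujan's identity $\sum_{r\in A_q}e^{2\pi i(a/q)r}=\mu(q)$ from Section~\ref{sec:2} produces exactly the claimed main term $\frac{\mu(q)}{\varphi(q)}\Phi_{M,N}(\xi-a/q)$.

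It remains to bound the leftover $\sum_{r\in A_q}e^{2\pi i(a/q)r}\big(g(N)E_r(N)-g(M)E_r(M)-\int_M^N g'(t)E_r(t)\,{\rm d}t\big)$. Hypothesis \eqref{eq:7} gives $\abs{g(t)}=\abs{K(t)}\lesssim t^{-1}$ and $\abs{g'(t)}\le 2\pi\abs{\beta}\abs{K(t)}+\abs{K'(t)}\lesssim\abs{\beta}t^{-1}+t^{-2}$, which for $t\in[M,N]$ with $M\ge N(\log N)^{-\alpha/4}$ and $\abs{\beta}\le N^{-1}(\log N)^\alpha$ is $\lesssim_\alpha(\log N)^{5\alpha/4}N^{-2}$. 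Combining this with $\abs{E_r(t)}\lesssim N\exp(-c\sqrt{\log M})$ and $\log M\ge\tfrac12\log N$ (for $N$ large), each of the three contributions to the per-class error is $\lesssim_\alpha(\log N)^{5\alpha/4}\exp(-c'\sqrt{\log N})$; since $\abs{A_q}=\varphi(q)\le q\le(\log N)^\alpha$, the sum over $r$ costs a further factor $(\log N)^\alpha$, so the total error is $\lesssim_\alpha(\log N)^{9\alpha/4}\exp(-c'\sqrt{\log N})\lesssim_\alpha(\log N)^{-\alpha}$, the quasi-exponential decay swallowing any fixed power of $\log N$.

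The \emph{main obstacle} is purely one of bookkeeping the ranges of uniformity: one must quote Siegel--Walfisz with moduli $q$ up to a fixed power of $\log N$, and one must verify that restricting the summation to the short window $(M,N]$ with $M$ close to $N$ spoils neither the $\exp(-c\sqrt{\log(\cdot)})$ saving nor the absorption of small primes. Both are guaranteed by the standing hypotheses $q\le(\log N)^\alpha$ (from membership in $\mathfrak M_N^\alpha$) and $M\ge N(\log N)^{-\alpha/4}$; the ineffectivity of the constant in Siegel's theorem is harmless since $\alpha$ is fixed.
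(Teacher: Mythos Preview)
Your argument is correct and follows essentially the same route as the paper's proof: split into residue classes modulo $q$, apply partial summation against the Chebyshev function in progressions, invoke Siegel--Walfisz, and sum over $r\in A_q$ using Ramanujan's identity. The only cosmetic differences are that the paper passes through the intermediate integer sum $\sum_{n\in(M,N]}e^{2\pi i\theta n}K(n)$ before replacing it by the integral (you go directly to $\Phi_{M,N}$ by reversing the integration by parts), and the paper quotes the weaker Siegel--Walfisz saving $(\log x)^{-5\alpha}$ rather than $\exp(-c\sqrt{\log x})$; neither changes the structure of the argument.
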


\begin{proof}
	For a prime number $p$, we have  $p \mid q$ if and only if $(p\ \mathrm{mod}\ q, q) > 1$, thus
	\[
	\Big\lvert
	\sum_{\atop{1 \leq r \leq q}{(r,q) > 1}} \sum_{\atop{p \in \PP_{M, N}}{q \mid (p -r)}}
	e^{2\pi i \xi p} K(p) \log p
	\Big\rvert
	\leq 
	N^{-1} (\log N)^{\alpha/4}
	\sum_{\atop{p \in \PP}{p \mid q}} \log p
	\lesssim 
	N^{-1}
	(\log N)^{\alpha/4+1}.
	\]
	Let $\theta = \xi - a/q$ and observe that if $p \equiv r \pmod q$ then
	\[
	\xi p \equiv \theta p + ra/q  \pmod 1.
	\]
	Consequently, we have
	\begin{equation}
		\label{eq:2}
		\sum_{r \in A_q}
		\sum_{\atop{p \in \PP_{M, N}}{q \mid (p - r)}}
		e^{2\pi i \xi p} K(p) \log p
		=
		\sum_{r \in A_q} e^{2 \pi i r a/q}
		\sum_{\atop{p \in \PP_{M, N}}{q \mid (p - r)}} e^{2\pi i \theta p} K(p) \log p.
	\end{equation}
    For $x\ge 2$ let
	\[
	\psi(x; q, r) = \sum_{\atop{p \in \PP_x}{q \mid (p-r)}} \log p.
	\]
    Notice that the summation by parts applied to the inner sum on the right-hand side in
	\eqref{eq:2} gives
	\begin{multline}
		\label{eqm:6}
		\sum_{\atop{n \in (M, N]}{q \mid (n-r)}} e^{2 \pi i \theta n} K(n) \ind{\PP}(n) \log n\\
		=
		\psi(N; q, r)e^{2 \pi i \theta N} K(N)
		-\psi(M; q, r)e^{2 \pi i \theta M} K(M)
		-\int_M^{N} \psi(t; q, r)
		\frac{d}{dt} \left(e^{2\pi i \theta t}K(t) \right) {\rm d}t.
	\end{multline}
	Analogously,  we obtain
	\[
		\sum_{n \in(M, N]} e^{2\pi i \theta n} K(n)
		=Ne^{2 \pi i \theta N} K(N)
		-
		Me^{2 \pi i \theta M} K(M)
		-
		\int_{M}^{N} t \frac{d}{ {\rm d}t}
		\left( e^{2\pi i \theta t}K(t)\right) {\rm d}t.
	\]
	By Siegel--Walfisz theorem (see \cite{sieg, wal}), for every $\alpha>0$ and
	$x \geq 2$
	\begin{equation}
		\label{eq:34}
		\Big\lvert \psi(x; q, r) - \frac{x}{\varphi(q)} \Big\rvert
		\lesssim x (\log x)^{-5\alpha}
	\end{equation}
	where the implied constant depends only on $\alpha$. Therefore \eqref{eq:7} together with
	\eqref{eqm:6}--\eqref{eq:34} yield
	\begin{multline*}
		\Big\lvert
		\sum_{\atop{p \in \PP_{M, N}}{q \mid (p -r)}} e^{2\pi i\theta p} K(p) \log p
		-\frac{1}{\varphi(q)} \sum_{n \in (M, N]} e^{2\pi i \theta n} K(n)
		\Big\rvert\\
		\lesssim
		\bigg\lvert \psi(N;q,r) - \frac{N}{\varphi(q)} \bigg\rvert \abs{K(N)}
		+\bigg\lvert \psi(M;q,r) - \frac{M}{\varphi(q)} \bigg\rvert \abs{K(M)}
		+\int_M^N
		\bigg\lvert \psi(t;q,r) - \frac{t}{\varphi(q)} \bigg\rvert
		t^{-1} \big(\abs{\theta}  + t^{-1}\big) {\rm d}t\\
        \lesssim (\log N)^{-5\alpha}+N (\log N)^{-5\alpha}
		\int_{M}^{N} t^{-1} \big(
		\abs{\theta}  + t^{-1}\big) {\rm d}t
		\lesssim (\log N)^{-2\alpha}.
	\end{multline*}
	Finally, by \eqref{eq:2}, we have
	\begin{align*}
		\Big\lvert
		\sum_{r \in A_q} \sum_{\atop{p \in \PP_{M, N}}{q \mid (p -r)}}
		e^{2 \pi i \xi p} K(p) \log p 
		-\frac{\mu(q)}{\varphi(q)} \sum_{n \in(M, N]} e^{2\pi i \theta n} K(n)
	  	\Big\rvert
		\lesssim 
		q (\log N)^{-2\alpha} \leq (\log N)^{-\alpha}.
	\end{align*}
    The proof will be completed when we replace the sum by an integral. Indeed, 
	\[
	\int_{M}^{N} e^{2\pi i \theta t} K(t) {\rm d}t =
	\sum_{n\in (M, N]} \int_0^1 e^{2\pi i \theta (n + t-1)} K(n+t-1) {\rm d}t,
	\]
	thus
	\begin{multline*}
		\Big\lvert
		\sum_{n\in (M, N]}
		e^{2\pi i \theta n} K(n) -
		\int_0^1 e^{2\pi i \theta (n+t-1)} K(n+t-1) {\rm d}t
		\Big\rvert\\
		\leq
		\sum_{n\in (M, N]}
		\int_0^1 \big\lvert 1-e^{-2\pi i \theta (t-1)} \big\rvert \abs{K(n)} {\rm d}t
		+
		\sum_{n\in (M, N]} \int_0^1 \abs{K(n) - K(n+t-1)} {\rm d}t\\
		\lesssim N^{-1} (\log N)^{2\alpha},
	\end{multline*}
	which finishes the proof.
\end{proof}

\begin{proposition}
	\label{prop:5}
	Let $\alpha>32$ and $M, N\in\NN$ such that $N(\log N)^{-\alpha/4} \le M\le N$. 
	Suppose that $K$ is a differentiable function on $[M, N]$ satisfying
	\[
		\norm{x}\abs{K(x)}+\norm{x}^2\abs{K'(x)} \lesssim 1.
	\]
	Then for each $\alpha > 32$ there is $C_{\alpha}>0$, such that for all
	$\xi \in \mathfrak{m}_N^\alpha$
	\[
	\Big\lvert
	\sum_{p\in\PP_{M, N}} e^{2 \pi i \xi p} K(p)\log p
	\Big\rvert
	\leq C_\alpha (\log N)^{-\alpha/8}.
	\]
\end{proposition}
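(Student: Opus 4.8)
The proposition is a classical circle--method estimate, the minor--arc counterpart of Proposition~\ref{prop:1}, and I would prove it by removing the slowly varying weight $K$ via summation by parts and then quoting the standard minor--arc bound for the exponential sum over the primes. Put
\[
U(t)=\sum_{\atop{p\in\PP}{p\le t}}e^{2\pi i\xi p}\log p,
\]
which is constant between consecutive primes, so Abel summation yields
\[
\sum_{p\in\PP_{M,N}}e^{2\pi i\xi p}K(p)\log p
=U(N)K(N)-U(M)K(M)-\int_M^N U(t)K'(t)\,{\rm d}t.
\]
Using $\abs{K(x)}\lesssim x^{-1}$, $\abs{K'(x)}\lesssim x^{-2}$, and hence $\int_M^N\abs{K'(t)}\,{\rm d}t\lesssim M^{-1}\lesssim N^{-1}(\log N)^{\alpha/4}$, the right-hand side is $\lesssim N^{-1}(\log N)^{\alpha/4}\sup_{M\le t\le N}\abs{U(t)}$. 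Thus it suffices to show $\sup_{M\le t\le N}\abs{U(t)}\lesssim_\alpha N(\log N)^{4-\alpha/2}$ for $\xi\in\mathfrak m_N^\alpha$, since then the whole sum is $\lesssim_\alpha(\log N)^{4-\alpha/4}\lesssim_\alpha(\log N)^{-\alpha/8}$, because $4-\alpha/4\le-\alpha/8$ exactly when $\alpha\ge 32$ (bounded $N$ being trivial, the sum being $\lesssim N$ there).

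Next I would recast the minor--arc membership in Diophantine form. By Dirichlet's principle there is $a/q$ with $(a,q)=1$, $1\le q\le N(\log N)^{-\alpha}$ and $\abs{\xi-a/q}\le q^{-1}N^{-1}(\log N)^\alpha$. Were $q\le(\log N)^\alpha$, this would give $\abs{\xi-a/q}\le N^{-1}(\log N)^\alpha$, i.e.\ $\xi\in\mathfrak M_N^\alpha(a/q)\subseteq\mathfrak M_N^\alpha$, contradicting $\xi\in\mathfrak m_N^\alpha$. Hence $(\log N)^\alpha<q\le N(\log N)^{-\alpha}$, and in particular $\abs{\xi-a/q}\le q^{-2}$; crucially, this single approximation serves all truncations $t\le N$ simultaneously.

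Now I would invoke the classical bound obtained from Vaughan's identity (see \cite{vau} or \cite{nat}): writing $\Lambda$ for the von Mangoldt function, for $(a,q)=1$ with $\abs{\xi-a/q}\le q^{-2}$ and $2\le t\le N$ one has
\[
\Big\lvert\sum_{n\le t}\Lambda(n)e^{2\pi i\xi n}\Big\rvert
\lesssim\Big(\frac{t}{\sqrt q}+\sqrt{tq}+t^{4/5}\Big)(\log t)^4.
\]
Since $(\log N)^\alpha<q\le N(\log N)^{-\alpha}$ and $t\le N$, the three terms are $\lesssim N(\log N)^{-\alpha/2}$, $\lesssim N(\log N)^{-\alpha/2}$ and $\lesssim N^{4/5}$ respectively, so the right-hand side is $\lesssim N(\log N)^{4-\alpha/2}$. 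Stripping the prime powers through $U(t)=\sum_{n\le t}\Lambda(n)e^{2\pi i\xi n}+\calO(\sqrt t\log t)$ and absorbing the error gives $\sup_{M\le t\le N}\abs{U(t)}\lesssim_\alpha N(\log N)^{4-\alpha/2}$, which is what was needed; substituting into the first paragraph closes the argument.

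All the real work is hidden in the quoted exponential sum estimate, namely the treatment of the bilinear (Type~I and Type~II) sums produced by Vaughan's decomposition of $\Lambda$, controlled by the elementary bound for $\sum_m e^{2\pi i\xi m}$ together with Cauchy--Schwarz; this is entirely classical and is precisely the arithmetic input already used by Bourgain \cite{bou-p} and Wierdl \cite{wrl} in this setting, so I would simply cite it. The only points needing care in our formulation are the uniformity of the prime--number estimate over $t\in[M,N]$ --- which is why one fixes a single Dirichlet approximation at the outset --- and the bookkeeping that the loss $(\log N)^{\alpha/4}$ incurred in passing from $N^{-1}$ to $M^{-1}$ in the summation by parts is comfortably outweighed by the gain $(\log N)^{-\alpha/2}$ once $\alpha>32$.
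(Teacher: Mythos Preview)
Your proof is correct and follows essentially the same route as the paper: Abel summation to strip the weight $K$, Dirichlet's principle to locate a rational approximation with $(\log N)^\alpha<q\le N(\log N)^{-\alpha}$, and then the classical minor--arc exponential sum bound (you cite Vaughan, the paper cites Vinogradov) giving $\sup_{t\le N}\lvert U(t)\rvert\lesssim N(\log N)^{4-\alpha/2}$, after which the arithmetic $4-\alpha/4\le -\alpha/8$ for $\alpha>32$ closes the argument. The only cosmetic difference is that you pass through the von Mangoldt sum and remove prime powers, whereas the paper quotes the prime--sum version directly.
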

\begin{proof}
	Let 
	\[
	F_x(\xi) = \sum_{p \in \PP_x} e^{2 \pi i \xi p} \log p.
	\]
	By the summation by parts we have
	\begin{align}
		\label{eq:27}
		\Big\lvert
		\sum_{p \in \PP_{M, N}} e^{2\pi i \xi p} K (p) \log p
		\Big\rvert 
		\leq 
		|F_{N}(\xi)| \, |K(N)| +|F_{M}(\xi)| \, |K(M)|
		+ \int_{M}^{N} |F_t(\xi)| \, \sabs{ K'(t)}
		{\rm d}t.
	\end{align}
	By Dirichlet's principle one can find $(a, q) = 1$, 
	$(\log N)^\alpha \leq q \leq N(\log N)^{-\alpha}$ such that
	\[
	|\xi - a/q| \leq q^{-1} N^{-1} (\log N)^\alpha \leq q^{-2}.
	\]
	Then Vinogradov's theorem (see \cite[Theorem 1, Chapter IX]{vin} or \cite[Theorem 8.5]{nat})
	yields
	\[
	|F_t(\xi)|
	\lesssim 
	(\log N)^4 \big(N q^{-1/2} + N^{4 /5} + N^{1/2} q^{1/2}\big)
	\lesssim 
	N(\log N)^{4 - \alpha/2}
	\]
	for $t \in [M, N]$. Combining $|K'(t)| \lesssim M^{-2}$ with the last bound and \eqref{eq:27}
	we conclude
	\[
	\Big\lvert 
	\sum_{p\in\PP_{M, N}} e^{2 \pi i \xi p} K(p)\log p
    \Big\rvert 
	\lesssim (\log N)^{4 - \alpha/4}
	\lesssim (\log N)^{-\alpha/8}
	\]
	since $M\ge N (\log N)^{-\alpha/4}$ and  $\alpha > 32$.
\end{proof}

\section{Variational estimates}
\label{sec:4}
Given $\epsilon \in (0, 1)$ we set $Z_\epsilon = \{\lfloor 2^{k^\epsilon} \rfloor : k \in \NN\}$ 
(see \cite{zk}). For a sequence $\big(a_n : n \in \NN\big)$ we define {\it long $r$-variations} by
\[
	V_r^L(a_n : n \in \NN) = V_r(a_n : n \in Z_\epsilon),
\]
and the corresponding {\em short $r$-variations}
\[
	V_r^S(a_n : n \in \NN) = \Big(\sum_{k \geq 1} 
	V_r\big(a_n : n \in [N_{k-1}, N_k) \big)^r\Big)^{1/r}
\]
where $N_k = \lfloor 2^{k^\epsilon} \rfloor$.
Then
\[
	V_r(a_n : n \in \NN) \lesssim V_r^S(a_n : n \in \NN) + V_r^L(a_n : n \in \NN).
\]
\begin{proposition}
	\label{prop:4}
	Let $s \in (1, \infty)$ and assume $\big(T_N : N \in \NN\big)$ is a sequence of 
	operators satisfying
	\begin{equation}
		\label{eq:33}
		\big\lVert T_N - T_{N-1} \big\rVert_{\ell^s \rightarrow \ell^s} \lesssim N^{-1} (\log N).
	\end{equation}
	Then for any $r\ge2$ there is $\epsilon \in (0, 1)$ and $C > 0$ such that for all $f \in \ell^s(\ZZ)$
	\[
		\Big\lVert
		\Big(\sum_{k \geq 0} V_r\big(T_N f : N \in [N_k, N_{k+1})\big)^r\Big)^{1/r}
		\Big\rVert_{\ell^s}
		\leq
		C
		\lVert f \rVert_{\ell^s}
	\]
	where $N_k = \lfloor 2^{k^\epsilon} \rfloor$.
\end{proposition}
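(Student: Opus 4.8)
The plan is to discard the $r$-structure inside each block, keeping only the total variation, and to exploit that the blocks $B_k = [N_k, N_{k+1}) \cap \NN$ (with $N_k = \lfloor 2^{k^\epsilon} \rfloor$) are so short that even this crude bound becomes summable once $\epsilon$ is taken small. First I would use that for $r \ge 2 \ge 1$ one has the pointwise domination $V_r \le V_1$, and that over a finite index set the $1$-variation is realised by the consecutive increments, so that, with $D_N := T_N - T_{N-1}$,
\[
V_r\big(T_N f : N \in B_k\big) \le V_1\big(T_N f : N \in B_k\big) = \sum_{N_k < N < N_{k+1}} \abs{D_N f} =: S_k f .
\]
Thus the proposition reduces to proving $\big\|\big(\sum_{k \geq 0} (S_k f)^r\big)^{1/r}\big\|_{\ell^s} \lesssim \|f\|_{\ell^s}$.

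Next I would estimate a single $\|S_k f\|_{\ell^s}$. By the triangle inequality in $\ell^s$ and \eqref{eq:33},
\[
\|S_k f\|_{\ell^s} \le \sum_{N_k < N < N_{k+1}} \|D_N f\|_{\ell^s} \lesssim \Big(\sum_{N_k < N < N_{k+1}} N^{-1}\log N\Big)\|f\|_{\ell^s}.
\]
On $B_k$ one has $2^{k^\epsilon} \le N < 2^{(k+1)^\epsilon}$, so $N^{-1}\log N \lesssim 2^{-k^\epsilon} k^\epsilon$; and since $(k+1)^\epsilon - k^\epsilon \le \epsilon k^{\epsilon-1} \le 1$, the number of terms is $\abs{B_k} \le N_{k+1} - N_k \le 2^{k^\epsilon}\big(2^{(k+1)^\epsilon - k^\epsilon} - 1\big) + 1 \lesssim 2^{k^\epsilon} k^{\epsilon - 1} + 1$. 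Combining these, $\|S_k f\|_{\ell^s} \lesssim c_k \|f\|_{\ell^s}$ with $c_k := k^{2\epsilon - 1} + 2^{-k^\epsilon} k^\epsilon$, and for $\rho := \min(r, s)$ (note $\rho > 1$ because $s > 1$ and $r \ge 2$) the sequence $(c_k)_{k \geq 0}$ lies in $\ell^\rho$ as soon as $(1 - 2\epsilon)\rho > 1$. So I would simply fix $\epsilon \in \big(0, \tfrac12(1 - 1/\rho)\big)$; one may in fact take $\epsilon$ depending only on $\min(2, s)$, which makes the choice uniform in $r \ge 2$.

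Finally I would sum over $k$, distinguishing two regimes. If $s \le r$, the embedding $\ell^s \hookrightarrow \ell^r$ gives the pointwise bound $\big(\sum_k (S_k f)^r\big)^{1/r} \le \big(\sum_k (S_k f)^s\big)^{1/s}$, hence by Tonelli's theorem $\big\|\big(\sum_k (S_k f)^r\big)^{1/r}\big\|_{\ell^s} \le \big(\sum_k \|S_k f\|_{\ell^s}^s\big)^{1/s}$. If $s > r$, then $s/r > 1$ and Minkowski's inequality in $\ell^{s/r}$ gives $\big\|\big(\sum_k (S_k f)^r\big)^{1/r}\big\|_{\ell^s} = \big\|\sum_k (S_k f)^r\big\|_{\ell^{s/r}}^{1/r} \le \big(\sum_k \|S_k f\|_{\ell^s}^r\big)^{1/r}$. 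In either case the right-hand side equals $\big(\sum_k \|S_k f\|_{\ell^s}^\rho\big)^{1/\rho} \lesssim \big(\sum_k c_k^\rho\big)^{1/\rho}\|f\|_{\ell^s} \lesssim \|f\|_{\ell^s}$, which is the asserted estimate.

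I do not expect a genuine obstacle here: the whole point is that the blocks $B_k$ have relative length only $\sim k^{\epsilon - 1}$, so discarding the $r$-structure costs essentially nothing. The only steps requiring care are the elementary asymptotics of $N_k = \lfloor 2^{k^\epsilon} \rfloor$ used to count $\abs{B_k}$ and to bound $\log N$ on $B_k$, the choice of $\epsilon$, and the case split $s \le r$ versus $s > r$ in the final summation; none of these is deep.
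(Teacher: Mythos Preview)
Your proof is correct and follows essentially the same approach as the paper: bound $V_r$ by the $V_1$ (sum of consecutive differences), estimate each block by $c_k\simeq k^{2\epsilon-1}$ using \eqref{eq:33} together with the asymptotics $N_{k+1}-N_k\lesssim 2^{k^\epsilon}k^{\epsilon-1}$, and then sum in $k$. The only organizational difference is that the paper fixes $u=\min\{2,s\}$ from the outset (which, since $r\ge 2\ge u$, gives the pointwise inclusion $\ell^u\hookrightarrow\ell^r$ and then a single application of Minkowski in $\ell^s(\ell^u)$), whereas you first take $\rho=\min\{r,s\}$ and handle the summation via the case split $s\le r$ versus $s>r$; as you yourself remark, one may replace $\rho$ by $\min\{2,s\}$, which recovers exactly the paper's choice $\epsilon<\tfrac{u-1}{2u}$.
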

\begin{proof}
	Let $u = \min\{2, s\}$ and $0<\epsilon < \frac{u-1}{2u}$. 
	Then, by the monotonicity and Minkowski's inequality, we get
	\begin{multline*}
		\big\|V_r^S\big(T_Nf:N\in\NN\big)\big\|_{\ell^s}
		\le 
		\Big\|
		\Big(\sum_{k\ge 0}
		\Big( \sum_{N = N_k}^{N_{k+1} - 1} \abs{T_{N+1}f-T_{N}f} \Big)^{u}
		\Big)^{1/u} \Big\|_{\ell^s}\\
		\le 
		\Big(
		\sum_{k\ge 0} 
		\Big(
		\sum_{N = N_k}^{N_{k+1}-1} 
		\vnorm{T_{N+1}f-T_{N}f}_{\ell^s}
		\Big)^{u}\Big)^{1/u}.
	\end{multline*}
	Since $N_{k+1}-N_k\lesssim 2^{k^\epsilon}k^{-1+\epsilon}$, by \eqref{eq:33}, we can estimate
	\[
		\sum_{N = N_k}^{N_{k+1}-1} 
		\vnorm{T_{N+1}f-T_{N}f}_{\ell^s}
		\lesssim
		(N_{k+1}-N_k) \frac{\log N_k}{N_k}
		\lVert f \rVert_{\ell^s}
		\lesssim
		k^{-1+2\epsilon}
		\lVert f \rVert_{\ell^s}.
	\]
	Thus
	\[
		\big\lVert V_r^S\big(T_N f : N \in \NN\big) \big\rVert_{\ell^s}
		\lesssim
		\Big(\sum_{k\ge 0} k^{-u(1-2\epsilon)}\Big)^{1/u}
		\vnorm{f}_{\ell^s}
		\lesssim 
		\vnorm{f}_{\ell^s}.
	\]
\end{proof}

\begin{proposition}
	Let $s \in (1, \infty)$, $r > 2$ and $\epsilon \in (0, 1)$. Suppose $\big(T_N : N \in \NN\big)$
	is a sequence of operators on $\ell^s(\ZZ)$ such that there is a sequence 
	$(\nu_N : N \in \NN)$ of Fourier multipliers on $\TT$ such that there are $B_1, B_2 > 0$ such
	that for all $f \in \ell^1(\ZZ)$ and $k \in \NN$
	\begin{align}
		\label{eq:36}
		\big\lVert
		V_r \big(\calF^{-1}\big(\nu_N \hat{f} \big) : N \in Z_\epsilon \big)
		\big\rVert_{\ell^s}
		\leq B_1 
		\lVert f \rVert_{\ell^s},
	\end{align}
	and
	\begin{align}
		\label{eq:37}
		\big\lVert
		\big(T_{N_k} - T_{N_{k-1}}\big) f - 
		\calF^{-1}\big(\big(\nu_{N_k} - \nu_{N_{k-1}} \big)\hat{f}\big)
		\big\rVert_{\ell^s}
		\leq
		B_2
		k^{-2}
		\lVert f \rVert_{\ell^s}
	\end{align}
	where $N_k = \lfloor 2^{k^\epsilon} \rfloor$. Then there is $C > 0$ such that for all
	$f \in \ell^1(\ZZ)$
	\[
		\big\lVert
		V_r \big(T_N f : N \in Z_\epsilon \big)
		\big\rVert_{\ell^s}
		\leq
		C (B_1 + B_2)
		\lVert f \rVert_{\ell^s}.
	\]
\end{proposition}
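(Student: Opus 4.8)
The plan is to estimate the long variation $V_r\big(T_N f : N \in Z_\epsilon\big)$ by splitting $T_{N_k}$ into the "main" multiplier piece $\calF^{-1}\big(\nu_{N_k}\hat f\big)$ plus the error, and to control each summand separately. Writing, for $k \geq 1$,
\[
	T_{N_k} f = \calF^{-1}\big(\nu_{N_k}\hat f\big) + \sum_{j=1}^{k} E_j f, \qquad
	E_j f := \big(T_{N_j} - T_{N_{j-1}}\big) f - \calF^{-1}\big(\big(\nu_{N_j} - \nu_{N_{j-1}}\big)\hat f\big),
\]
(together with a harmless initial term $T_{N_0} f - \calF^{-1}\big(\nu_{N_0}\hat f\big)$, which by \eqref{eq:37} with $k=1$, or by an obvious separate estimate, has $\ell^s$ norm $\lesssim B_2 \|f\|_{\ell^s}$), one gets by the triangle inequality for the $V_r$ seminorm
\[
	V_r\big(T_N f : N \in Z_\epsilon\big)
	\leq
	V_r\big(\calF^{-1}\big(\nu_N \hat f\big) : N \in Z_\epsilon\big)
	+ V_r\Big(\sum_{j=1}^{k} E_j f : k \in \NN\Big).
\]
The first term is exactly bounded by $B_1\|f\|_{\ell^s}$ via hypothesis \eqref{eq:36}, so everything reduces to the error term.

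For the error term I would not even use the variational structure: since $k \mapsto \sum_{j=1}^{k} E_j f$ is a sequence indexed by $\NN$, its $r$-variation (for any $r \geq 1$) is dominated by the total variation, i.e. the $\ell^1$ sum of consecutive differences,
\[
	V_r\Big(\sum_{j=1}^{k} E_j f : k \in \NN\Big)
	\leq
	\sum_{k \geq 1} \abs{E_k f}
	\quad\text{pointwise,}
\]
hence by Minkowski's inequality and \eqref{eq:37},
\[
	\Big\lVert V_r\Big(\sum_{j=1}^{k} E_j f : k \in \NN\Big)\Big\rVert_{\ell^s}
	\leq
	\sum_{k \geq 1} \vnorm{E_k f}_{\ell^s}
	\leq
	B_2 \Big(\sum_{k \geq 1} k^{-2}\Big) \vnorm{f}_{\ell^s}
	\lesssim B_2 \vnorm{f}_{\ell^s}.
\]
Combining the two bounds and adding the contribution of the initial term gives the claimed estimate with $C$ an absolute constant times the bound $\sum_k k^{-2}$, which is where the hypothesis that the errors decay like $k^{-2}$ (rather than merely being summable, which would also suffice) enters.

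There is essentially no obstacle here: the statement is the "soft" glueing step that transfers a variational estimate for the model multipliers $\nu_N$ to the true operators $T_N$ along the sparse set $Z_\epsilon$, at the cost of a summable error. The only points requiring a word of care are (i) the precise form of the triangle inequality for $V_r$ — namely $V_r(a_n + b_n) \leq V_r(a_n) + V_r(b_n)$, which is immediate from the definition as a supremum of $\ell^r$ norms of difference vectors and Minkowski's inequality in $\ell^r$ — and (ii) handling the $k=0$ endpoint so that the telescoping identity for $\calF^{-1}(\nu_{N_k}\hat f)$ lines up with the telescoping sum of the $E_j$'s; this is absorbed into the initial term estimated above. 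All of this is valid a priori only for $f \in \ell^1(\ZZ) \subseteq \ell^s(\ZZ)$, on which the multipliers act unambiguously, matching the hypotheses.
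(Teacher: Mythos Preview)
Your argument is correct and essentially identical to the paper's proof: both split $T_{N_k}f$ into the model piece $\calF^{-1}(\nu_{N_k}\hat f)$ plus the error, bound the model variation by \eqref{eq:36}, and control the error by passing from $V_r$ to $V_1$ and summing the $B_2 k^{-2}$ bounds from \eqref{eq:37}. One minor remark: your care about the ``initial term'' $T_{N_0}f-\calF^{-1}(\nu_{N_0}\hat f)$ is unnecessary, since it is constant in $k$ and $V_r$ is a seminorm vanishing on constant sequences (so your appeal to \eqref{eq:37} with $k=1$, which does not actually bound that quantity, is never needed).
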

\begin{proof}
	By triangle inequality and \eqref{eq:36}
	\begin{multline*}
		\big\lVert
		V_r\big(T_N f : N \in Z_\epsilon\big)
		\big\rVert_{\ell^s}
		\leq
		\big\lVert
		V_r\big(\calF^{-1} \big( \nu_N \hat{f} \big) : N \in Z_\epsilon \big)
		\big\rVert_{\ell^s}
		+
		\big\lVert
		V_r\big(T_N f - \calF^{-1} \big( \nu_N \hat{f} \big) : N \in Z_\epsilon \big)
		\big\rVert_{\ell^s}\\
		\leq
		B_1 \vnorm{f}_{\ell^s}
		+
		\big\lVert
		V_r\big(T_N f - \calF^{-1} \big( \nu_N \hat{f} \big) : N \in Z_\epsilon \big)
		\big\rVert_{\ell^s}.
	\end{multline*}
	To bound the second term we notice that
	\begin{multline*}
		V_r\big(T_N f - \calF^{-1}\big(\nu_N \hat{f} \big) : N \in Z_\epsilon \big) 
		\leq
		V_1\big(T_N f - \calF^{-1}\big(\nu_N \hat{f} \big) : N \in Z_\epsilon \big) \\
		\leq
		\sum_{k \geq 1}
		\big\lvert
		\big(T_{N_k} - T_{N_{k-1}}\big) f 
		- \calF^{-1}\big(\big(\nu_{N_k} - \nu_{N_{k-1}}\big) \hat{f}\big)
		\big\rvert.
	\end{multline*}
	Hence, by \eqref{eq:37} we conclude the proof.
\end{proof}

\subsection{The averages}

\begin{theorem}
	\label{thm:4}
	For each $s \in (1, \infty)$ and $r > 2$ there is $C_s > 0$ such that for all
	$f \in \ell^s(\ZZ)$
	\[
		\big\lVert
		V_r\big(A_N * f : N \in \NN \big)
		\big\rVert_{\ell^s}
		\leq
		C_s
		\frac{r}{r-2}
		\vnorm{f}_{\ell^s}.
	\]
\end{theorem}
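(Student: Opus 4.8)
I would split the variation into its short and long parts,
$V_r(A_N*f:N\in\NN)\lesssim V_r^S(A_N*f:N\in\NN)+V_r^L(A_N*f:N\in\NN)$,
where the long variation runs over $Z_\epsilon=\{\lfloor 2^{k^\epsilon}\rfloor:k\in\NN\}$ for the $\epsilon\in(0,1)$ produced by Proposition~\ref{prop:4}, and then bound the two pieces separately; the short part will be uniform in $r\ge 2$, so that the entire $r/(r-2)$ growth is incurred in a single continuous estimate used for the long part. For the short part I apply Proposition~\ref{prop:4} with $T_Nf=A_N*f$: its hypothesis \eqref{eq:33} holds because
$A_N-A_{N-1}=\ind{\PP}(N)\,\frac{\log N}{N}\,\delta_N+\bigl(\frac1N-\frac1{N-1}\bigr)\sum_{p\in\PP_{N-1}}\delta_p\log p$,
so Chebyshev's estimate $\sum_{p\le x}\log p\lesssim x$ gives $\|A_N-A_{N-1}\|_{\ell^1}\lesssim\frac{\log N}{N}+\frac{1}{N(N-1)}\cdot N\lesssim N^{-1}\log N$. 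Proposition~\ref{prop:4} then yields $\|V_r^S(A_N*f:N\in\NN)\|_{\ell^s}\lesssim\|f\|_{\ell^s}$ with a constant depending only on $s$.

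\noindent\textbf{Long variations.}
For the long part I invoke the proposition stated immediately above, with $T_Nf=A_N*f$. As comparison multipliers take $\Psi_N(\xi)=N^{-1}\int_0^N e^{2\pi i\xi t}\,{\rm d}t$, the Fourier multiplier of the continuous one--sided average $f\mapsto N^{-1}\int_0^N f(\cdot-t)\,{\rm d}t$; it satisfies $\abs{\Psi_N(\xi)}\lesssim\min\{1,N^{-1}\abs{\xi}^{-1}\}$. Let $\psi_N^t$ be given by \eqref{eq:12} with $\Psi_N$ in the role of the general function there, and set $\nu_N=\sum_{t\ge0}\psi_N^t$. By Theorem~\ref{thm:2}, $\nu_N$ approximates the multiplier $m_N$ of $A_N*(\cdot)$, namely $\sup_{\xi\in\TT}\abs{m_N(\xi)-\nu_N(\xi)}\lesssim_\alpha(\log N)^{-\alpha/8}$; here one uses that the hypotheses \eqref{eq:105}--\eqref{eq:107} of Theorem~\ref{thm:2} for $m_N$ are exactly the major-- and minor--arc estimates of Propositions~\ref{prop:1} and \ref{prop:5} with $K\equiv N^{-1}$ and the sum extended over all of $\PP_N$ --- this is permissible because the lower bound on $M$ there only serves to keep $\abs{K},\abs{K'}$ under control on $[M,N]$, which is automatic for $K\equiv N^{-1}$. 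To verify \eqref{eq:36}: the continuous estimate $\|\calV_r(\calF^{-1}(\Psi_N\calF f):N\in\NN)\|_{L^s}\lesssim\frac{r}{r-2}\|f\|_{L^s}$ holds by \cite{jsw} --- this is the only step where the $r$--growth enters --- so Theorem~\ref{thm:11} gives $\|\calV_r(\calF^{-1}(\psi_N^t\hat f):N\in\NN)\|_{\ell^s}\lesssim\frac{r}{r-2}\,2^{-t\delta}\|f\|_{\ell^s}$ for each $t$; summing the geometric series in $t$ and using $V_r(\,\cdot:N\in Z_\epsilon)\le\calV_r(\,\cdot:N\in\NN)$ yields \eqref{eq:36} with $B_1\lesssim r/(r-2)$. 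The same summation, carried out with a fixed exponent in place of $r$, also gives the uniform bound $\|\calF^{-1}(\nu_N\hat f)\|_{\ell^p}\lesssim_p\|f\|_{\ell^p}$ for all $N$ and every $p\in(1,\infty)$, which I use below.

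\noindent\textbf{The approximation error and conclusion.}
It remains to verify \eqref{eq:37}. Put $S_k=(m_{N_k}-\nu_{N_k})-(m_{N_{k-1}}-\nu_{N_{k-1}})$, so that the operator in \eqref{eq:37} is $f\mapsto\calF^{-1}(S_k\hat f)$. On $\ell^2$ we have $\|\calF^{-1}(S_k\hat f)\|_{\ell^2}\le\bigl(\sup_{\xi\in\TT}\abs{S_k}\bigr)\|f\|_{\ell^2}$ and
$\sup_{\xi\in\TT}\abs{S_k}\le\sup_{\xi\in\TT}\abs{m_{N_k}-\nu_{N_k}}+\sup_{\xi\in\TT}\abs{m_{N_{k-1}}-\nu_{N_{k-1}}}\lesssim_\alpha(\log N_k)^{-\alpha/8}\lesssim_\alpha k^{-\epsilon\alpha/8}$.
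On $\ell^{s_1}$, for any $s_1\in(1,\infty)$, one has the crude uniform bound $\|\calF^{-1}(S_k\hat f)\|_{\ell^{s_1}}\lesssim_{s_1}\|f\|_{\ell^{s_1}}$, since $S_k$ is a sum of four terms each of which is the multiplier of either $A_{N}*(\cdot)$ with $\|A_N\|_{\ell^1}\lesssim1$ or the convolution operator with multiplier $\nu_N$, uniformly bounded on $\ell^{s_1}$ by the above. Choosing $s_1$ on the side of $s$ opposite to $2$ and interpolating, $\|\calF^{-1}(S_k\hat f)\|_{\ell^s}\lesssim k^{-c(s)\alpha}\|f\|_{\ell^s}$ for some $c(s)>0$; taking $\alpha$ large enough (depending on $s$ and $\epsilon$) makes this $\lesssim k^{-2}\|f\|_{\ell^s}$, which is \eqref{eq:37} with $B_2=O_s(1)$. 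The proposition stated above then gives $\|V_r^L(A_N*f:N\in\NN)\|_{\ell^s}\lesssim(B_1+B_2)\|f\|_{\ell^s}\lesssim\frac{r}{r-2}\|f\|_{\ell^s}$, first for $f\in\ell^1(\ZZ)$ and then, by density, for all $f\in\ell^s(\ZZ)$. Adding the short and long parts completes the proof.

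\noindent\textbf{Main obstacle.}
The delicate point is the passage from the $\ell^2$ smallness of $S_k$ to an $\ell^s$ bound summable in $k$: the circle method controls the approximation error only pointwise on $\TT$, so the $\ell^s$ estimate has to be recovered by interpolating against the uniform $\ell^{s_1}$--boundedness of $\nu_N$, which in turn relies on the exponential gain $2^{-t\delta}$ of Theorem~\ref{thm:11}. The other thing to watch --- and the reason for this whole arrangement --- is keeping the factor $r/(r-2)$ coming from \cite{jsw} from being amplified anywhere: it survives because the sum over $t$ is geometric and the short variations are handled with constants independent of $r$, whereas a less careful route (such as that of \cite{zk}) loses an extra $r/(r-2)$.
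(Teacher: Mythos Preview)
Your proof is correct and follows the paper's argument almost line for line: short/long split via Proposition~\ref{prop:4}, then the unnamed proposition for the long part, with \eqref{eq:36} coming from \cite{jsw} plus Theorem~\ref{thm:11}, and \eqref{eq:37} from Theorem~\ref{thm:2} together with interpolation against a crude uniform $\ell^{s_1}$ bound.

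The only divergence is in how you verify the major/minor-arc hypotheses \eqref{eq:105}--\eqref{eq:107} of Theorem~\ref{thm:2}. The paper first strips off the primes $p\le M=N(\log N)^{-\alpha/4}$ by the trivial bound $M/N$, and then applies Propositions~\ref{prop:1} and \ref{prop:5} on $(M,N]$ exactly as stated. You instead assert that those propositions apply directly with $K\equiv N^{-1}$ and $M$ small. Your conclusion is correct, but the reason you give is slightly loose: the hypothesis $M\ge N(\log N)^{-\alpha/4}$ is used in the proof of Proposition~\ref{prop:1} not only through $|K|\lesssim t^{-1}$, $|K'|\lesssim t^{-2}$, but also to make $\log t\sim\log N$ so that the Siegel--Walfisz error $t(\log t)^{-5\alpha}$ can be replaced by $N(\log N)^{-5\alpha}$. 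What actually saves you when $K\equiv N^{-1}$ is that $|K|=N^{-1}$ uniformly and $K'=0$, so after summation by parts the integrand is $t(\log t)^{-5\alpha}\cdot|\theta|N^{-1}$, whose integral over $[2,N]$ is $\lesssim|\theta|N(\log N)^{-5\alpha}\lesssim(\log N)^{-4\alpha}$; the problematic $\int (\log t)^{-5\alpha}t^{-1}\,{\rm d}t$ term never appears. The paper's splitting is the cleaner route since it uses the propositions as black boxes.
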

\begin{proof}
	Let $T_N f = A_N * f$. We fix $s \in (1, \infty)$. Since $T_N$ satisfies \eqref{eq:33},
	by Proposition \ref{prop:4}, there is $\epsilon \in (0, 1)$ such that for all $r > 2$
	the short $r$-variations are bounded on $\ell^s(\ZZ)$.
	
	For the long variations, it is enough to show \eqref{eq:36} and \eqref{eq:37}. First,
	by \cite{jsw}, a sequence of multipliers $\big(\Phi_N : N \in \NN\big)$ defined by
	\[
	\Phi_N(\xi) = \int_0^1 e^{2\pi i \xi N t} {\rm d} t
	\]
	satisfies \eqref{eq:34} for all $r > 2$ with $B \lesssim r/(r-2)$. Therefore, by Theorem
	\ref{thm:11}, for $\nu_N = \sum_{t \geq 0} \nu_N^t$ we have
	\[
		\big\lVert
		V_r\big( \calF^{-1}\big(\nu_N \hat{f}\big) : N \in \NN \big)
		\big\rVert_{\ell^s}
		\leq
		C_s \frac{r}{r-2} \sum_{t \geq 0} 2^{-\delta t} \vnorm{f}_{\ell^s}
		\lesssim
		\frac{r}{r-2} \vnorm{f}_{\ell^s}.
	\]
	For the proof of \eqref{eq:37}, we notice that for any $s \in (1, \infty)$
	\begin{equation}
		\label{eq:39}
		\big\lVert
		\big(T_{N_k} - T_{N_{k-1}}\big) f 
		- \calF^{-1}\big(\big(\nu_{N_k} - \nu_{N_{k-1}}\big) \hat{f} \big)
		\big\rVert_{\ell^s}
		\lesssim
		\vnorm{f}_{\ell^s}.
	\end{equation}
	Therefore, it is enough to show that for all $\alpha > 32$ and $N \in \NN$
	\begin{equation}
		\label{eq:40}
		\big\lVert
		T_N f - \calF^{-1}\big(\nu_N \hat{f}\big)
		\big\rVert_{\ell^2}
		\lesssim
		(\log N)^{-\alpha/8}
		\vnorm{f}_{\ell^2}.
	\end{equation}
	Indeed, we can estimate
	\begin{equation}
		\label{eq:38}
		\big\lVert
		\big(T_{N_k} - T_{N_{k-1}}\big) f 
		- \calF^{-1}\big(\big(\nu_{N_k} - \nu_{N_{k-1}}\big) \hat{f} \big)
		\big\rVert_{\ell^2}
		\lesssim
		k^{-\epsilon \alpha/8}
		\vnorm{f}_{\ell^2}.
	\end{equation}
	Then, for properly chosen value of $\alpha$, interpolation between \eqref{eq:39} and
	\eqref{eq:38} gives \eqref{eq:37}.

	To prove \eqref{eq:40}, let us denote by $m_N$ the Fourier multiplier corresponding with $A_N$.
	Let
	\[
	K_N(x) = \frac{1}{N} \ind{[M, N]}(x).
	\]
	and $M = N (\log N)^{-\alpha/4}$. For
	$\xi \in \mathfrak{M}^\alpha_N(a/q) \cap \mathfrak{M}_N^\alpha$ we may write
	\begin{multline*}
		\Big\lvert
		m_N(\xi) - \frac{\mu(q)}{\varphi(q)} \Phi_N(\xi-a/q)
		\Big\rvert\\
		\leq
		\frac{M}{N} 
		\Big\lvert
		m_M(\xi)
		-
		\frac{\mu(q)}{\varphi(q)}
		\Phi_{0, M}(\xi - a/q)
		\Big\rvert
		+ 
		\Big\lvert
		\sum_{p\in\PP_{M, N}}e^{2\pi i \xi p}K(p)\log p
		-\frac{\mu(q)}{\varphi(q)} \Phi_{M, N}(\xi-a/q)
		\Big\rvert\\
		\lesssim(\log N)^{-\alpha/4}
	\end{multline*}
	where in the last estimate we have used Proposition \ref{prop:1}. Moreover, by Proposition
	\ref{prop:5}, for $\xi \in \mathfrak{m}_N^\alpha$
	\[
		\big\lvert m_N(\xi) \big\rvert \lesssim (\log N)^{-\alpha/8}.
	\]
	Hence, by Theorem \ref{thm:2} for $\Psi_N = \Phi_N$ and $\psi_N^t = \nu_N^t$ we obtain
	\eqref{eq:40}.
\end{proof}

\subsection{The truncated singular integrals}
\begin{theorem}
	For each $s \in (1, \infty)$ and $r > 2$ there is $C_s > 0$ such that for all
	$f \in \ell^1(\ZZ)$
	\[
		\big\lVert
		V_r\big(H_N * f : N \in \NN \big)
		\big\rVert_{\ell^s}
		\leq
		C_s
		\frac{r}{r-2}
		\vnorm{f}_{\ell^s}.
	\]
\end{theorem}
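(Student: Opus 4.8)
The plan is to run the scheme of the proof of Theorem~\ref{thm:4} with $T_N f = H_N * f$. The short variations are immediate: $H_N - H_{N-1}$ is convolution with $\frac{\log p}{|p|}(\delta_p + \delta_{-p})$, where $p$ is the largest prime in $(N-1,N]$ (the zero kernel if there is none), so $\vnorm{T_N - T_{N-1}}_{\ell^s \to \ell^s} \lesssim N^{-1}\log N$, i.e.\ $(T_N)$ satisfies \eqref{eq:33}, and Proposition~\ref{prop:4} bounds the short $r$-variations of $(H_N * f : N\in\NN)$ by an absolute multiple of $\vnorm{f}_{\ell^s}$, for a suitable $\epsilon\in(0,1)$ and every $r>2$. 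It remains to bound the long variations $\big\lVert V_r(H_N * f : N\in Z_\epsilon)\big\rVert_{\ell^s}$.

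For these I would pass to the tail. By \cite{mt2} the maximal operator \eqref{equ:2} is bounded on $\ell^s(\ZZ)$ and $H_N * f$ converges, so the untruncated singular integral $\mathcal T f = \lim_N H_N * f$ is well defined; set $R_N f = \mathcal T f - H_N * f = \sum_{|p|>N} f(\cdot-p)\frac{\log|p|}{p}$. Since $\mathcal T f$ does not depend on $N$, $V_r(H_N * f : N\in A) = V_r(R_N f : N\in A)$ for every $A\subseteq\NN$, so it suffices to bound $\big\lVert V_r(R_N f : N\in Z_\epsilon)\big\rVert_{\ell^s}$. Following the same scheme as for the averages, this reduces to establishing, for $T_N = R_N$ and $\nu_N = \sum_{t\geq 0}\nu_N^t$ with $\nu_N^t$ built via \eqref{eq:12} from
\[
\Phi_N(\xi) = \int_{|t|>N} e^{2\pi i\xi t}\,\frac{{\rm d}t}{t},
\]
the two estimates \eqref{eq:36} and \eqref{eq:37}. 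For \eqref{eq:36}: the operators $\calF^{-1}(\Phi_N\calF f)$ form a dilation-parametrized family of tail-truncated Hilbert transforms, which by \cite{jsw} satisfy the $L^s(\RR)$ $r$-variational bound with constant $\lesssim r/(r-2)$ for all $r>2$, $s\in(1,\infty)$; by Theorem~\ref{thm:11} this gives $\big\lVert V_r(\calF^{-1}(\nu_N^t\hat f) : N\in\NN)\big\rVert_{\ell^s} \lesssim \frac{r}{r-2}\,2^{-\delta t}\vnorm{f}_{\ell^s}$, and summing over $t$ yields \eqref{eq:36} with $B_1 \lesssim r/(r-2)$.

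For \eqref{eq:37}: since $R_N$ and $\calF^{-1}(\nu_N\hat\cdot)$ are uniformly bounded on $\ell^s(\ZZ)$ (cf.\ \eqref{eq:39}), interpolation as in \eqref{eq:39}--\eqref{eq:38} reduces \eqref{eq:37} to the $\ell^2$ estimate
\begin{equation}
\label{eq:tailapprox}
\big\lVert R_N f - \calF^{-1}(\nu_N\hat f)\big\rVert_{\ell^2} \lesssim (\log N)^{-\alpha/8}\vnorm{f}_{\ell^2}, \qquad \alpha>32.
\end{equation}
Writing $\tilde m_N(\xi) = \sum_{|p|>N} e^{2\pi i\xi p}\frac{\log|p|}{p}$ for the multiplier of $R_N$, I would deduce \eqref{eq:tailapprox} from Theorem~\ref{thm:2} with $m_N = \tilde m_N$, $\Psi_N = \Phi_N$, $\psi_N^t = \nu_N^t$. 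The decay $|\Phi_N(\xi)| \lesssim \min\{1,(N|\xi|)^{-1}\}$ demanded there does hold here: by the odd symmetry $\Phi_N(\xi) = 2i\int_N^\infty \sin(2\pi\xi t)\,{\rm d}t/t$, and one integration by parts (with $|\sin u|\le|u|$ when $N|\xi|\le1$) gives it --- it is precisely this decay, which $\int_{1\le|t|\le N} e^{2\pi i\xi t}\,{\rm d}t/t$ fails to have on $1/N\lesssim|\xi|\lesssim1$, that makes the tail the right object. To verify the hypotheses \eqref{eq:105} and \eqref{eq:107} of Theorem~\ref{thm:2}, I would split $\sum_{|p|>N} = \sum_{j\geq0}\sum_{2^jN<|p|\leq 2^{j+1}N}$ and apply Proposition~\ref{prop:1} on $\mathfrak{M}_{2^{j+1}N}^\alpha(a/q)\cap\mathfrak{M}_{2^{j+1}N}^\alpha$ and Proposition~\ref{prop:5} on $\mathfrak{m}_{2^{j+1}N}^\alpha$, both with the kernel $K(x)=1/x$, which satisfies \eqref{eq:7} and, for $N$ large, the size constraint of those propositions; the main terms telescope over $j$ to $\frac{\mu(q)}{\varphi(q)}\Phi_N(\xi-a/q)$, while the errors sum to $\sum_{j\geq0}(\log 2^jN)^{-c\alpha} \lesssim (\log N)^{1-c\alpha} \lesssim (\log N)^{-\alpha/8}$ for $\alpha$ large. (A point requiring care: for large $j$ a point $\xi$ on the major arc of $N$ may lie on the minor arc of $2^{j+1}N$; there one uses Proposition~\ref{prop:5}, and $\big|\int_{2^jN\leq|t|\leq 2^{j+1}N} e^{2\pi i(\xi-a/q)t}\,{\rm d}t/t\big| \lesssim (2^jN|\xi-a/q|)^{-1}$ is of the same small order, so both sides still match.)

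The step I expect to be the main obstacle is \eqref{eq:tailapprox}: the non-decay of $\int_{1\le|t|\le N} e^{2\pi i\xi t}\,{\rm d}t/t$ forces the passage to the tail operator (equivalently, one subtracts the full singular integral, which is invisible to $V_r$ because it is $N$-independent), after which the dyadic summation of Propositions~\ref{prop:1} and \ref{prop:5} over the infinitely many blocks $\{2^jN<|p|\leq2^{j+1}N\}$ must be organised so that the errors stay summable and the main terms telescope to $\Phi_N$. Everything else is routine; as for the averages, the sharp dependence $r/(r-2)$ --- rather than the $(r/(r-2))^2$ of \cite{zk} --- comes from feeding the sharp continuous estimate of \cite{jsw} through Theorem~\ref{thm:11}.
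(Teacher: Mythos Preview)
Your approach is correct and would go through with the bookkeeping you outline, but the paper takes a simpler route that avoids both the passage to the tail and the infinite dyadic summation.

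The key difference is in how \eqref{eq:37} is established. You recast the long variations in terms of the tail $R_N = \mathcal T - H_N$ (which leaves $V_r$ unchanged), choose $\Phi_N(\xi) = \int_{|t|>N} e^{2\pi i\xi t}\,{\rm d}t/t$, and then verify the hypotheses of Theorem~\ref{thm:2} for the full tail multiplier $\tilde m_N$ by splitting $\{|p|>N\}$ into infinitely many dyadic shells and summing the errors from Propositions~\ref{prop:1} and~\ref{prop:5}. As you note, this forces you to track, shell by shell, whether $\xi$ lies on the major or minor arc at scale $2^{j+1}N$, and to control the cross terms when that classification changes with $j$; the error sum is really $\sum_{j\ge 0}(\log 2^{j}N)^{-\alpha/8}\lesssim(\log N)^{1-\alpha/8}$, which still suffices for \eqref{eq:37} after taking $\alpha$ large, though not literally $(\log N)^{-\alpha/8}$.

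The paper instead keeps $T_N=H_N*\cdot$ and the truncated multiplier $\Phi_N(\xi)=\int_{-1}^{1}e^{2\pi i \xi N t}\,{\rm d}t/t$, and applies Theorem~\ref{thm:2} not to $m_{N_k}$ itself but to the \emph{consecutive difference}: $m_N$ is taken to be the multiplier of $T_{N_k}-T_{N_{k-1}}$, with $\Psi_N=\Phi_{N_k}-\Phi_{N_{k-1}}$, $\psi_N^t=\nu_{N_k}^t-\nu_{N_{k-1}}^t$, and $K(x)=x^{-1}\ind{[N_{k-1},N_k]}(x)$. The point is that this $\Psi_N$, being an integral over the single finite shell $N_{k-1}\le|t|\le N_k$ with $N_{k-1}/N_k\to 1$, \emph{does} satisfy $|\Psi_N(\xi)|\lesssim\min\{1,(N|\xi|)^{-1}\}$, so Theorem~\ref{thm:2} applies directly, and Propositions~\ref{prop:1} and~\ref{prop:5} are invoked exactly once with $M=N_{k-1}$, $N=N_k$ (the size constraint $M\ge N(\log N)^{-\alpha/4}$ holds along $Z_\epsilon$). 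No tail, no infinite sum, no case analysis across shells.

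What each approach buys: the paper's is shorter and self-contained---it does not need the prior existence or boundedness of $\mathcal T$ from \cite{mt2}. Your approach has the conceptual advantage of producing a genuine pointwise approximation of $\tilde m_N$ by $\nu_N$ valid for \emph{every} $N$, not just along $Z_\epsilon$; but since \eqref{eq:37} only asks for control of consecutive differences along $Z_\epsilon$, that extra uniformity is not needed here.
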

\begin{proof}
	We set $T_N f = H_N * f$. For a fixed $s \in (1, \infty)$, by Proposition \ref{prop:4}
	there is $\epsilon \in (0, 1)$ such that for all $r > 2$, short $r$-variations are
	$\ell^s(\ZZ)$-bounded.

	Next, we estimate the long variations. By \cite{jsw}, the sequence of multipliers
	$\big(\Phi_N : N \in \NN\big)$ defined as
	\[
		\Phi_N(\xi) = \int_{-1}^1 e^{2\pi i N t} \frac{{\rm d}t}{t}
	\]
	satisfies \eqref{eq:34} for all $r > 2$ and $B \lesssim r/(r-2)$, hence for 
	$\nu_N = \sum_{t \geq 0} \nu_N^t$ we have
	\[
		\big\lVert
		V_r\big( \calF^{-1}\big(\nu_N \hat{f}\big) : N \in \NN \big)
		\big\rVert_{\ell^s}
		\lesssim
		\frac{r}{r-2} \vnorm{f}_{\ell^s}.
	\]
	Analogously to the case of averages, to show \eqref{eq:37}, it is enough to prove
	\[
		\big\lVert
		\big(T_{N_k} - T_{N_{k-1}}\big) f 
		- \calF^{-1}\big(\big(\nu_{N_k} - \nu_{N_{k-1}}\big) \hat{f} \big)
		\big\rVert_{\ell^2}
		\lesssim
		k^{-\epsilon \alpha/8}
		\vnorm{f}_{\ell^2}
	\]
	which is a consequence of Proposition \ref{prop:1}, Proposition \ref{prop:5} and Theorem
	\ref{thm:2} applied to $N = N_k$, $M = N_{k-1}$, $m_N$ being the Fourier multiplier
	corresponding to $T_N - T_M$, $\Psi_N = \Phi_N - \Phi_M$, $\psi_N^t = \nu_N^t - \nu_M^t$ and
	\[
		K(x) = \frac{1}{x} \ind{[M, N]}(x).
	\]
\end{proof}

\section{Unweighted averages }
\label{sec:5}
The next lemma and proposition allow  us to compare averages with different weights.
In both of them the $r$-variation norm can be replaced by any other norm on sequences.
These results combined with Theorem \ref{thm:var'} will imply Theorem \ref{thm:var''}.
\begin{lemma}
	\label{lem:av-comp}
	Let $\big(\lambda_n^k : n, k \in \NN\big)$ be a sequence of non-negative real numbers such that
	\[
		\sum_{n = 1}^\infty \lambda_n^k = \Lambda < \infty
	\]
	for every $k$. Suppose that for every $N \in \NN$ the sequence
	\[
		\Big(\sum_{n = 1}^N \lambda_n^k : k \in \NN\Big)
	\]
	is decreasing. Then for any sequence $\big(a_n : n \in \NN\big)$ of complex numbers and 
	$r \geq 1$ we have
	\[
		V_r\Big(\sum_{n=1}^{\infty} \lambda_n^k a_n: N\in\NN\Big)
		\leq
		\Lambda \cdot V_r\big(a_n : n \in \NN\big).
	\]
\end{lemma}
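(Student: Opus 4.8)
The plan is to represent each weighted average $b_k=\sum_{n=1}^\infty\lambda_n^k a_n$ as an integral, over an auxiliary parameter $t\in(0,\Lambda)$, of a single term $a_{N(k,t)}$ of the sequence $(a_n)$, arranged so that for every fixed $t$ the index $N(k,t)$ is monotone in $k$. Minkowski's integral inequality then transfers the $r$-variation bound from $(a_n)$ to $\big(b_k:k\in\NN\big)$.

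First I would dispose of the trivial case: if $V_r(a_n:n\in\NN)=\infty$ there is nothing to prove, so assume it is finite; then $\sup_n\abs{a_n}\le\abs{a_1}+V_r(a_n:n\in\NN)<\infty$, so each $b_k$ converges absolutely. Write $\Lambda_m^k=\sum_{n=1}^m\lambda_n^k$, so that $\Lambda_0^k=0$, $\Lambda_m^k\nearrow\Lambda$ as $m\to\infty$, and by hypothesis $k\mapsto\Lambda_m^k$ is non-increasing for each fixed $m$. For $t\in(0,\Lambda)$ set $N(k,t)=\min\{m\in\NN:\Lambda_m^k>t\}$; this is finite because $\Lambda_m^k\to\Lambda>t$, and since $\{m:\Lambda_m^{k'}>t\}\subseteq\{m:\Lambda_m^k>t\}$ for $k\le k'$, the number $N(k,t)$ is non-decreasing in $k$ at each fixed $t$. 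Since $N(k,t)=m$ holds exactly for $t\in[\Lambda_{m-1}^k,\Lambda_m^k)$, an interval of length $\lambda_m^k$, one obtains the identity
\[
b_k=\int_0^\Lambda a_{N(k,t)}\,{\rm d}t,
\]
the interchange of sum and integral being justified by absolute convergence.

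With this representation in hand the rest is routine. Fixing $k_0<k_1<\dots<k_J$ and applying Minkowski's integral inequality to the finite $\ell^r$-sum over $j$,
\begin{align*}
\Big(\sum_{j=1}^J\abs{b_{k_j}-b_{k_{j-1}}}^r\Big)^{1/r}
&=\Big(\sum_{j=1}^J\Big|\int_0^\Lambda\big(a_{N(k_j,t)}-a_{N(k_{j-1},t)}\big)\,{\rm d}t\Big|^r\Big)^{1/r}\\
&\le\int_0^\Lambda\Big(\sum_{j=1}^J\big|a_{N(k_j,t)}-a_{N(k_{j-1},t)}\big|^r\Big)^{1/r}\,{\rm d}t.
\end{align*}
For each fixed $t$ the indices $N(k_0,t)\le\dots\le N(k_J,t)$ are non-decreasing, so after discarding the vanishing terms in which two consecutive indices coincide one is left with a sum of $r$-th powers of increments of $(a_n)$ over a strictly increasing finite set of indices in $\NN$, which is at most $V_r(a_n:n\in\NN)^r$. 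Integrating this constant bound over $(0,\Lambda)$ and then taking the supremum over all $k_0<\dots<k_J$ finishes the proof.

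The only step with real content is the monotone layer-cake representation $b_k=\int_0^\Lambda a_{N(k,t)}\,{\rm d}t$: the hypothesis that $N\mapsto\sum_{n\le N}\lambda_n^k$ is decreasing in $k$ is exactly what makes $N(\cdot,t)$ monotone for every $t$, and it is this monotonicity that lets the inner $\ell^r$-sum be controlled by the full $r$-variation. I do not anticipate any serious obstacle beyond setting up this representation correctly; once it is in place, Minkowski's inequality and the elementary bound for the $r$-variation of a monotone sequence of indices do the rest.
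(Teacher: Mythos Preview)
Your proof is correct and follows essentially the same approach as the paper's: both use the layer-cake representation $b_k=\int_0^\Lambda a_{N(k,t)}\,{\rm d}t$ with $N(k,t)=\min\{m:\sum_{n\le m}\lambda_n^k>t\}$, note that the decreasing hypothesis makes $k\mapsto N(k,t)$ non-decreasing, and finish with Minkowski's inequality. You have in fact been slightly more careful than the paper in disposing of the trivial case and in handling repeated indices.
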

\begin{proof}
	For each $k \in \NN$ we define a function $N_k : [0, \Lambda] \rightarrow \NN$ by
	\[
		N_k(t) = \inf\big\{N\in\NN : \sum_{i = 1}^N \lambda_i^k > t\big\}.
	\]
	Let
	\[
	I^k_n=\big\{
		t\in[0, \Lambda] : N_k(t) = n
		\big\},
	\]
	and observe that $|I^k_n|=\lambda_n^k$. Thus
	\[
		\sum_{n = 1}^\infty \lambda_n^k a_n =
		\sum_{n = 1}^\infty |I^k_n| a_n
		=
		\sum_{n = 1}^\infty \int_{I^k_n} a_n {\rm d}t
		=
		\int_0^\Lambda a_{N_k(t)} {\rm d}t.
	\]
	Hence, for any sequence of integers $0 < k_0 < k_1 < \ldots < k_J$, by Minkowski's inequality
	\begin{multline}
		\label{eq:21}
		\Big(
		\sum_{j = 1}^J
		\Big\lvert
		\sum_{n = 1}^\infty
		\big(\lambda_n^{k_j} - \lambda_n^{k_{j-1}}\big) a_n
		\Big\rvert^r
		\Big)^{1/r}
		=
		\Big(
		\sum_{j = 1}^J 
		\Big\lvert
		\int_0^\Lambda a_{N_{k_j}(t)} - a_{N_{k_{j-1}}(t)} {\rm d} t
		\Big\rvert^r
		\Big)^{1/r} \\
		\leq
		\int_0^\Lambda
		\Big(
		\sum_{j = 1}^J 
		\Big\lvert
		a_{N_{k_j}(t)} - a_{N_{k_{j-1}}(t)} 
		\Big\rvert^r
		\Big)^{1/r}
		{\rm d} t
	\end{multline}
	Since for a fixed $t \in [0, \Lambda]$
	\[
		t < \sum_{n = 1}^{N_{k+1}(t)} \lambda_n^{k+1} 
		\leq \sum_{n = 1}^{N_{k+1}(t)} \lambda_n^k,
	\]
	we have $N_k(t) \leq N_{k+1}(t)$. Therefore, the right-hand side in \eqref{eq:21} can be
	bounded by 
	\[
		\Lambda \cdot V_r\big(a_n : n \in \NN\big).
	\]
\end{proof}

\begin{proposition}
	\label{cor:av-comp}
	Let $\big(w_n : n \in \NN\big)$ and $\big(w_n' : n \in \NN\big)$ be non-negative
	sequences satisfying one of the following conditions:
	\begin{enumerate}
		\item 
			\label{dec}
			the sequence $\big(w_n'/w_n : n \in \NN\big)$ decreases monotonically, or
		\item 
			\label{inc}
			the sequence $\big(w_n'/w_n : n \in \NN\big)$ increases monotonically and
			\[
				C=\sup_{N \in \NN} \frac{W_N w_N'}{W_N' w_N} < \infty,
			\]
		where $W_N = \sum_{n = 1}^N w_n$ and $W_N' = \sum_{n = 1}^N w_n'$.
	\end{enumerate}
	Then there is $C' > 0$ such that for each sequence $\big(a_n : n \in \NN \big)$ of complex
	numbers and $r \geq 1$
	\[
		V_r\Big(\sum_{n = 1}^N w_n a_n : N \in \NN\Big)
		\leq
		C' \cdot 
		V_r\Big(\sum_{n = 1}^N w_n' a_n : N \in \NN\Big).
	\]
\end{proposition}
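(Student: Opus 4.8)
The plan is to derive the inequality from Lemma~\ref{lem:av-comp} by a summation by parts. Write $B_N=\sum_{n=1}^N w_n a_n$, $b_N=\sum_{n=1}^N w_n' a_n$ (with $B_0=b_0=0$) and $e_n=w_n/w_n'$; after discarding indices where $w_n'=0$ (at which necessarily $w_n=0$ too, since otherwise the claim already fails for $a=\ind{\{n\}}$) we may assume $w_n,w_n'>0$ for all $n$. The goal is $V_r\big(B_N:N\in\NN\big)\le C'\,V_r\big(b_N:N\in\NN\big)$.

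The main step is the case in which $\big(w_n'/w_n:n\in\NN\big)$ is non-decreasing, equivalently $\big(e_n\big)$ is non-increasing. Since $w_na_n=e_n(b_n-b_{n-1})$, a summation by parts gives
\[
  B_N=\sum_{n=1}^N e_n(b_n-b_{n-1})=e_Nb_N+\sum_{n=1}^{N-1}(e_n-e_{n+1})b_n=\sum_{n\ge1}\lambda_n^N b_n ,
\]
where $\lambda_N^N=e_N$, $\lambda_n^N=e_n-e_{n+1}\ge0$ for $1\le n<N$, and $\lambda_n^N=0$ for $n>N$. The point is that $\big(\lambda_n^N\big)$ satisfies the hypotheses of Lemma~\ref{lem:av-comp} with $\Lambda=e_1$: the telescoping identity $\sum_{n\ge1}\lambda_n^N=e_1$ is independent of $N$, and for each fixed $\nu$ the partial sum $\sum_{n=1}^\nu\lambda_n^N$ equals $e_1$ when $N\le\nu$ and equals $e_1-e_{\nu+1}\le e_1$ when $N>\nu$, hence is non-increasing in $N$. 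Lemma~\ref{lem:av-comp}, applied with $\big(b_n\big)$ in the role of $\big(a_n\big)$, then gives $V_r\big(B_N:N\in\NN\big)\le e_1\,V_r\big(b_N:N\in\NN\big)$, i.e.\ the claim with $C'=e_1=\sup_n\big(w_n/w_n'\big)<\infty$.

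For the case in which $\big(w_n'/w_n\big)$ is non-increasing I would reduce to the previous one by reversing the order of summation on a finite window. Fix $K\in\NN$ and put $\check a_m=a_{K+1-m}$, $\check w_m=w_{K+1-m}$, $\check w_m'=w_{K+1-m}'$ for $1\le m\le K$; then $\big(\check w_m'/\check w_m:1\le m\le K\big)$ is non-decreasing, and its partial sums are $\sum_{m=1}^j\check w_m\check a_m=B_K-B_{K-j}$ and $\sum_{m=1}^j\check w_m'\check a_m=b_K-b_{K-j}$. Since $V_r$ is unchanged under the reflection $x\mapsto c-x$ and under reindexing, the case just treated gives
\[
  V_r\big(B_N:0\le N\le K\big)=V_r\big(B_K-B_{K-j}:0\le j\le K\big)\le\Big(\sup_{1\le n\le K}\frac{w_n}{w_n'}\Big)\,V_r\big(b_N:0\le N\le K\big) ,
\]
and letting $K\to\infty$ (harmlessly adjoining the index $N=0$, where all sums vanish) yields the claim with $C'=\sup_n\big(w_n/w_n'\big)$, which is finite in the only non-trivial case.

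I expect the main obstacle to be the monotonicity (``stochastic ordering'') condition of Lemma~\ref{lem:av-comp}, i.e.\ that $\sum_{n\le\nu}\lambda_n^N$ be non-increasing in $N$ for every fixed $\nu$: it is precisely this requirement that fixes the sign of $e_n-e_{n+1}$, and so forces the case split according to the direction of monotonicity of $w_n'/w_n$ and makes the reversal step necessary in the non-increasing-ratio case. Verifying it, together with the telescoping that identifies the constant, is the substantive part; the remaining points — discarding indices with $w_n'=0$, adjoining the index $0$, and the finiteness of $\sup_n(w_n/w_n')$ — are routine bookkeeping. The auxiliary quantity $C$ is not needed for this argument; it presumably enters only if one wants the sharper bound $C'\lesssim C$, obtained via a variant in which one compares the weighted averages $\frac{1}{W_N}\sum_{n\le N}w_na_n$ with $\frac{1}{W_N'}\sum_{n\le N}w_n'a_n$ in place of the plain partial sums.
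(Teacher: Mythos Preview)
Your summation-by-parts argument for the case $w_n'/w_n$ non-decreasing (case~(ii)) is correct and pleasantly simple; it even dispenses with the hypothesis $C<\infty$. The gap is in case~(i). After the reflection on $\{1,\dots,K\}$ your ``main step'' yields the constant $\check e_1=\check w_1/\check w_1'=w_K/w_K'$, and since in case~(i) the sequence $e_n=w_n/w_n'$ is \emph{increasing}, this constant may blow up as $K\to\infty$. Your remark that $\sup_n(w_n/w_n')$ is ``finite in the only non-trivial case'' is not justified by the stated hypotheses: with $w_n=n$, $w_n'=1$ and $a_n=\ind{\{n=M\}}$ one has $V_r(B_N)=M$ while $V_r(b_N)=1$, so no uniform $C'$ exists. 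The inequality, read literally for unnormalized partial sums, is therefore false in case~(i), and no reflection trick can save it.

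The resolution is that the paper's proof is really for the \emph{normalized} averages $\bar A_N=W_N^{-1}\sum_{n\le N}w_na_n$ and $\bar A_N'=(W_N')^{-1}\sum_{n\le N}w_n'a_n$ (note the factors $W_n/W_k'$ in the paper's $\lambda_n^k$, which only arise in that setting and make $\sum_n\lambda_n^k=1$), and the inequality actually established is $V_r(\bar A_N')\le C'\,V_r(\bar A_N)$; this is also how the proposition is invoked in the proof of Theorem~\ref{thm:var''}. With that reading, case~(i) is the direct one: Abel summation gives $\bar A_k'=\sum_n\lambda_n^k\bar A_n$ with $\lambda_n^k\ge 0$, and Lemma~\ref{lem:av-comp} applies with $\Lambda=1$. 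For case~(ii) the paper does \emph{not} reflect; instead, since now $\lambda_n^k\le 0$ for $n<k$ while $0\le\lambda_k^k\le C$, it sets $\tilde\lambda_n^k=-\lambda_n^k$ for $n<k$ and $\tilde\lambda_k^k=2C-\lambda_k^k$, so that $\tilde\lambda_n^k\ge 0$, $\sum_n\tilde\lambda_n^k=2C-1$, the partial sums are decreasing in $k$, and $\bar A_k'=2C\,\bar A_k-\sum_n\tilde\lambda_n^k\,\bar A_n$. Lemma~\ref{lem:av-comp} plus the triangle inequality then give $V_r(\bar A_N')\le(4C-1)\,V_r(\bar A_N)$. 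The hypothesis $C<\infty$ is essential in this step; your guess that it ``presumably enters only'' for a sharper constant is off --- it is precisely what makes the hard case go through.
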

\begin{proof}
	Let
	\[
		A_N = \sum_{n = 1}^N w_n a_n, \qquad A_N' = \sum_{n = 1}^N w_n' a_n.
	\]
	By partial summation we have
	\[
		A_k' = \sum_{n=1}^{\infty} \lambda_n^k A_n,
	\]
	where
	\[
		\lambda_n^k =
		\begin{cases}
			\frac{W_n}{W_k'} \Big(\frac{w_n'}{w_n} - \frac{w_{n+1}'}{w_{n+1}} \Big) 
			& \text{if } 1\le  n<k,\\
			\frac{W_k}{W_k'}\frac{w_k'}{w_k} & \text{if } n=k,\\
			0 & \text{otherwise.}
		\end{cases}
	\]
	Let us observe that for each $k \in \NN$ and $N \in \NN$
	\begin{equation}
		\label{sum1}
		\sum_{n=1}^N \lambda_n^k =
		\begin{cases}
			\frac{1}{W_k'}\Big(W_N' - W_N \frac{w_{N+1}'}{w_{N+1}}\Big)
			& \text{if } 1 \leq N < k,\\
			1 & \text{otherwise.}
		\end{cases}
	\end{equation}
	Moreover, for $1 \leq N < k$
	\[
		\frac{1}{W_k'}\Big(W_N' - W_N \frac{w_{N+1}'}{w_{N+1}}\Big) \leq 1,
	\]
	thus the sequence
	\[
		\Big(\sum_{n = 1}^N \lambda^k_n : k \in \NN\Big)
	\]
	is decreasing. Since in the case (i), $\lambda_n^k$ are non-negative, we may directly apply
	Lemma \ref{lem:av-comp}.

	For the proof in the case of (ii), we define
	\[
		\tilde{\lambda}_n^k = 
		\begin{cases}
			-\lambda_n^k   & \text{if } 1 \leq n<k,\\
			2C-\lambda_k^k & \text{if } n=k,\\
			0              & \text{otherwise.}
		\end{cases}
	\]
	%where 
%\[
%		C = \sup_{N \in \NN} \frac{W_N w_N'}{W_N' w_N}.
%	\]
	Therefore, $\tilde{\lambda}_n^k$ are non-negative and
	\[
		A_k' = 2 C A_k - \sum_{n = 1}^\infty \tilde{\lambda}_n^k A_n.
	\]
	Let us observe that 
	\[
		\sum_{n = 1}^N \tilde{\lambda}_n^k
		=
		\begin{cases}
			\frac{1}{W_k'}\big(W_{N+1} \frac{w_{N+1}'}{w_{N+1}} - W_{N+1}'\big) 
			& \text{if } 1 \leq N < k,\\
			2C - 1
			& \text{otherwise.}
		\end{cases}
	\]
	Since $C > 1$ and for $1 \leq N < k$
	\[
		\frac{1}{W_k'}\Big(W_{N+1} \frac{w_{N+1}'}{w_{N+1}} - W_{N+1}'\Big) \leq C,
	\]
	the sequence
	\[
		\Big(\sum_{n = 1}^N \tilde{\lambda}_n^k : k \in \NN\Big)
	\]
	is decreasing. Hence, by Lemma \ref{lem:av-comp}
	\[
		V_r\big(A_N' : N \in \NN\big) 
		\leq
		2 C \cdot V_r\big(A_N : N \in \NN\big)
		+
		V_r\Big(\sum_{n = 1}^k \tilde{\lambda}_n^k A_n : k \in \NN\Big)
		\leq
		(4 C - 1) \cdot V_r\big(A_N : N \in \NN\big).
	\]
\end{proof}
\begin{proof}[Proof of Theorem \ref{thm:var''}]
	Let $T_N = A_N' * f$ where
	\[
		A_N'(x) = \frac{1}{\abs{\PP_N}} \sum_{p \in \PP_N} \delta_p(x).
	\]
	Fix $s \in (1, \infty)$. By Proposition \ref{prop:4}, there is $\epsilon \in (0, 1)$ such that
	for each $r > 2$ the corresponding short $r$-variations are $\ell^s(\ZZ)$-bounded.
	
	Let $w'_n \equiv 1$, $w_n = \log n$, and $a_n = \ind{\PP}(n)$. By Proposition \ref{cor:av-comp}
	we get
	\begin{equation}
		\label{eq:28}
		V_r\big(A_N' * f(x) : N \in Z_\epsilon \big)
		\lesssim
		V_r\Big(\frac{N}{W_N} A_N * f(x) : N \in Z_\epsilon \Big).
	\end{equation}
	Since for any $\beta > 0$
	\[
		W_N = N \big(1 + \calO\big((\log N)^{-\beta}\big)\big),
	\]
	by \eqref{eq:28} and Theorem \ref{thm:4} we get
	\begin{multline*}
		\big\lVert
		V_r\big(A_N' f : N \in Z_\epsilon \big)
		\big\rVert_{\ell^s}
		\leq
		\big\lVert
		V_r\big(A_N f : N \in Z_\epsilon \big)
		\big\rVert_{\ell^s}
		+
		\big\lVert
		V_r\big(\big(1 - N/W_N \big)A_N f : N \in Z_\epsilon \big)
		\big\rVert_{\ell^s}\\
		\lesssim
		\frac{r}{r-2} \vnorm{f}_{\ell^s}
		+
		\sum_{k \geq 1} k^{-\beta \epsilon}
		\lVert A_{N_k} f \rVert_{\ell^s}
	\end{multline*}
	which finishes the proof.
\end{proof}

\begin{bibliography}{discrete}
	\bibliographystyle{plain} 

\begin{thebibliography}{10}

\bibitem{bou-p}
J.~Bourgain.
\newblock An approach to pointwise ergodic theorems.
\newblock In {\em Geometric aspects of functional analysis}, pages 204--223.
  Springer, 1988.

\bibitem{bou1}
J.~Bourgain.
\newblock {O}n the maximal ergodic theorem for certain subsets of the integers.
\newblock {\em Israel J. Math.}, 61:39--72, 1988.

\bibitem{bou2}
J.~Bourgain.
\newblock {O}n the pointwise ergodic theorem on {$L^p$} for arithmetic sets.
\newblock {\em Israel J. Math.}, 61:73--84, 1988.

\bibitem{bou}
J.~Bourgain.
\newblock Pointwise ergodic theorems for arithmetic sets. {W}ith an appendix by
  the author, {H}arry {F}urstenberg, {Y}itzhak {K}atznelson and {D}onald {S}.
  {O}rnstein.
\newblock {\em Publ. Math.-Paris}, 69(1):5--45, 1989.

\bibitem{iw}
A.~D. Ionescu and S.~Wainger.
\newblock {$L^p$} boundedness of discrete singular {R}adon transforms.
\newblock {\em J. Amer. Math. Soc.}, 19(2):357--383, 2006.

\bibitem{jkrw}
R.~L. Jones, R.~Kaufman, J.~M. Rosenblatt, and M.~Wierdl.
\newblock Oscillation in ergodic theory.
\newblock {\em Ergod. Theor. Dyn. Syst.}, 18(4):889--935, 1998.

\bibitem{jsw}
R.~L. Jones, A.~Seeger, and J.~Wright.
\newblock Strong variational and jump inequalities in harmonic analysis.
\newblock {\em Trans. Amer. Math. Soc.}, pages 6711--6742, 2008.

\bibitem{k}
B.~Krause.
\newblock Polynomial ergodic averages converge rapidly: Variations on a theorem
  of {B}ourgain.
\newblock arXiv:1402.1803, 2014.

\bibitem{mt2}
M.~Mirek and B.~Trojan.
\newblock Cotlar's ergodic theorem along the prime numbers.
\newblock Accepted for publication in J. Fourier Anal. Appl., 2014.

\bibitem{mt3}
M.~Mirek and B.~Trojan.
\newblock Discrete maximal functions in higher dimensions.
\newblock arXiv:1405.5566, 2014.

\bibitem{nat}
M.~B. Nathanson.
\newblock {\em {A}dditive {N}umber {T}heory {T}he {C}lassical {B}ases}.
\newblock Graduate Texts in Mathematics. Springer, Princeton, 1996.

\bibitem{sieg}
C.~L. Siegel.
\newblock {\"U}ber die {C}lassenzahl quadratischer {K}\"orper.
\newblock {\em Acta Arith.}, 1:83--96, 1935.

\bibitem{vau}
R.~C Vaughan.
\newblock {\em The {H}ardy--{L}ittlewood {M}ethod}.
\newblock Cambridge Tracts in Mathematics. Cambridge University Press, 1981.

\bibitem{vin}
I.~M. Vinogradov.
\newblock {\em The {M}ethod of {T}rigonometrical {S}ums in the {T}heory of
  {N}umbers}.
\newblock Dover Books on Mathematics Series. Dover Publications, 1954.

\bibitem{wal}
A.~Walfisz.
\newblock {Z}ur additiven {Z}ahlentheorie. {II}.
\newblock {\em Math. Z.}, 40(1):592--607, 1936.

\bibitem{wrl}
M.~Wierdl.
\newblock {P}ointwise ergodic theorem along the prime numbers.
\newblock {\em Israel J. Math.}, 64(3):315--336, 1988.

\bibitem{zk}
P.~Zorin-Kranich.
\newblock Variation estimates for averages along primes and polynomials.
\newblock arXiv:1403.4085, 2014.

\end{thebibliography}
\end{bibliography}

\end{document}